\DeclareMathOperator{\GL}{\mathrm{GL}}
\DeclareMathOperator{\Ker}{\mathrm{Ker}}
\DeclareMathOperator{\Hom}{\mathrm{Hom}}
\DeclareMathOperator{\rk}{\mathrm{rk}}
\DeclareMathOperator{\tr}{\mathrm{tr}}
\DeclareMathOperator{\dd}{\mathrm{d}}
\DeclareMathOperator{\End}{\mathrm{End}}
\DeclareMathOperator{\gl}{\mathfrak{gl}}
\def\Im{\mathrm{Im}\,}
\def\ii{\mathrm{i}}
\def\Id{\mathrm{Id}}
\def\C{\mathbbm{C}}
\def\R{\mathbbm{R}}
\def\Q{\mathbbm{Q}}
\def\Z{\mathbbm{Z}}
\def\la{\lambda}
\def\om{\omega}
\def\eps{\varepsilon}
\newtheorem{prop}{Proposition}[section]
\newtheorem{lemma}[prop]{Lemma}
\newtheorem{cor}[prop]{Corollary}
\newtheorem{theo}[prop]{Theorem}
\title{Reflection groups acting on their hyperplanes}
\author{Ivan Marin}
\date{August 31st, 2008}
\begin{document}

\maketitle

\bigskip
\begin{center}
Institut de Math\'ematiques de Jussieu \\
Universit\'e Paris 7 \\
175 rue du Chevaleret \\
F-75013 Paris
\end{center}
\bigskip

\noindent {\bf Abstract.}
After having established elementary results on the relationship between
a finite complex (pseudo-)reflection group $W \subset \GL(V)$ and its reflection
arrangement $\mathcal{A}$, we prove that
the action of $W$ on $\mathcal{A}$ is canonically related with
other natural representations of $W$, through a `periodic' family
of representations of its braid group. We also prove that,
when $W$ is irreducible, then the squares of defining
linear forms for $\mathcal{A}$ span the quadratic forms
on $V$, which imply $|\mathcal{A}| \geq n(n+1)/2$
for $n = \dim V$, and relate the $W$-equivariance
of the corresponding map with the period
of our family.
\bigskip

\noindent {\bf Keywords.} Reflection arrangements, reflection groups,
quadratic forms.

\bigskip

\noindent {\bf MSC 2000.} 20F55, 20C15, 52C35, 15A63.


\section{Introduction}

Let $V$ be finite-dimensional $\C$-vector space,
$W \subset \GL(V)$ be a finite (pseudo-)reflection group
with corresponding hyperplane arrangement $\mathcal{A}$.
We assume that $\mathcal{A}$ is essential, meaning
that $\bigcap \mathcal{A} = \{ 0 \}$ and denote $n = \dim V$ the
rank of $W$. We recall that an arrangement $\mathcal{A}$ is called irreducible
if it cannot be written as $\mathcal{A}_1 \times \mathcal{A}_2$,
and that $W$ is called irreducible if it acts irreducibly on $V$.
A basic result can be written as follows

\medskip
\noindent (0) $\mathcal{A}$ is irreducible iff $W$ is irreducible.
\medskip

Steinberg showed that the exterior powers of $V$ are irreducible.
His proof is based on the encryption of irreducibility
in the connectedness of certain graphs. From
this approach, the following is easily deduced

\medskip
\noindent (1) If $W$ is irreducible, then it contains an \emph{irreducible} parabolic
subgroup.
\medskip

Although this result is probably well-known to experts and easily
checked, it does not seem to appear in print, and is a key tool for
the sequel.

We then consider the permutation $W$-module $\C \mathcal{A}$. A choice
of linear maps $\alpha_H \in V^*$ with kernel $H \in \mathcal{A}$
defines a linear map $\Phi : \C \mathcal{A} \to S^2 V^*$ through
$\alpha_H \mapsto \alpha_H^2$. This map can be chosen to be a morphism
of $W$-modules when $W$ is a Coxeter group. We prove

\medskip
\noindent (2) $\Phi$ is onto iff $W$ is irreducible
\medskip

\noindent meaning that each quadratic form on $V$ is a linear combination of
the quadratic forms $\alpha_H^2$, as soon as $W$ is irreducible. As
a corollary, we get

\medskip
\noindent (3) The cardinality of $\mathcal{A}$ is at least $n(n+1)/2$.
\medskip

\noindent This lower bound is better than the usual $|\mathcal{A}| \geq n/2$
of \cite{OT}, cor. 6.98, and is sharp, as $|\mathcal{A}| = n(n+1)/2$
when $W$ is a Coxeter group of type $A_n$.

We denote $d_H$ the order of the (cyclic) fixer in $W$ of $H \in \mathcal{A}$,
and define the distinguished reflection $s \in W$ to be the reflection in
$W$ with $H = \Ker(s-1)$ and additional eigenvalue $\zeta_H = \exp(2 \ii \pi/d_H)$.
We let $d : \mathcal{A} \to \Z$
denote $H \mapsto d_H$. We did not find the following in the
standard textbooks :

\medskip
\noindent (4) The data $(\mathcal{A},d)$ determines $W$.
\medskip

Letting $B$ denote the braid group associated to
$W$, we show that $\C \mathcal{A}$, considered as a linear representation
of $B$, can be deformed through a path in $\Hom(B,\GL(V))$ which
canonically connects $\C \mathcal{A}$ to other representations of $W$.
This turns out to provide a natural generalization of the action of Weyl groups
on their positive roots to arbitray reflection groups.

Finally, we prove that this path $h \mapsto R_h$ is periodic,
namely that $R_{h + \kappa(W)} \simeq R_h$ for some integer
$\kappa(W)$, with $\kappa(W) = 2$ when $W$ is a Coxeter group.
Moreover, $\kappa(W) = 2$ if and only if the morphism
$\Phi$ above can be chosen to be a morphism of
$W$-modules. In particular, we get

\medskip
\noindent (5) If $\kappa(W) = 2$ then the $W$-module $S^2 V^*$
is a quotient of $\C \mathcal{A}$.
\medskip

We emphasize the fact that the proofs presented here are elementary in the
sense that, except for one of the last results,
no use is made either of the Shephard-Todd classification
of pseudo-reflection groups, nor of the invariants theory of these
groups.

\section{Reflection groups and reflection arrangements}

We recall from \cite{OT} the following basic notions about reflection groups and hyperplane
arrangements.
An endomorphism $s \in \GL(V)$ is called a (pseudo-)reflection
if it has finite order and $\Ker(s-1)$ is an hyperplane of
$V$. A finite subgroup $W$ of some $\GL(V)$ which is generated by
reflections is called a (complex) (pseudo-)reflection group. The hyperplane arrangement associated
to it is the collection $\mathcal{A}$ of the reflecting hyperplanes
$\Ker(s-1)$ for $s$ a reflection of $W$. There is a natural
function $d : \mathcal{A} \to \Z, H \mapsto d_H$ which associates to each
$H \in \mathcal{A}$ the order of the subgroup of $W$ fixing
$H$. We let $\zeta_H = \exp(2 \ii \pi/d_H)$, and call
a reflection $s$ distinguished if its nontrivial eigenvalue is
$\zeta_H$, with $\Ker(s-1) = H$.

A nontrivial subgroup $W_0$ of $W$ is called \emph{parabolic} if it
is the fixer of some linear subspace of $V$. By a fundamental result
of Steinberg, this linear supspace lies inside some
intersection of reflecting hyperplanes, and $W_0$ is
also a reflection group in $\GL(V)$.

In general, a (central) hyperplane $\mathcal{A}$ arrangement is a finite
collection of linear hyperplanes in $V$. When $\mathcal{A}$
originates from a reflection group $W$, then $\mathcal{A}$ is called
a reflection arrangement. An arrangement $\mathcal{A}$ is called
essential if $\bigcap \mathcal{A} = \{ 0 \}$ ; for
two arrangements $\mathcal{A}_1, \mathcal{A}_2$ in $V_1,V_2$,
the arrangement $\mathcal{A}$ in $V = V_1 \times V_2$
is defined as $\{ H \oplus V_2 ; H \in \mathcal{A}_1 \} \cup
\{ V_1 \oplus H ; H \in \mathcal{A}_2 \}$ ; two arrangements in $V$ are
isomorphic if they are deduced one from the other by some element
of $\GL(V)$ ; an essential arrangement $\mathcal{A}$ is called irreducible if
it is not isomorphic to some nontrivial $\mathcal{A}_1 \times \mathcal{A}_2$.

The following lemma shows that, when $\mathcal{A}$ is a reflection
arrangement, the arrangement $\mathcal{A}$ together with
the order of the reflections determines the reflection group.
In particular, there is at most one reflection group with
reflections of order 2 admitting a given reflection arrangement. 
Notice that $\mathcal{A}$ can be assumed to be essential,
as the action of $W$ on $\bigcap \mathcal{A}$ is necessarily
trivial. Although basic, this fact does not appear in standard textbooks.
The proof given here has been found in common with Fran\c cois Digne
and Jean Michel.

\begin{prop} \label{propAdetermW} Let $\mathcal{A}$ be an essential hyperplane
arrangement in $V$.
\begin{enumerate}
\item If $P \in \GL(V)$ satisfies $P(H) \subset H$ for all $H \in \mathcal{A}$,
then $P$ is semisimple.
\item If $\mathcal{A}$ is a reflection arrangement associated
to a complex reflection group $W \subset \GL(V)$, then $(\mathcal{A},d)$ determines
$W$.
\end{enumerate}
\end{prop}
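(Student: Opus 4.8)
The plan is to treat the two parts separately, the first serving as the linear-algebraic engine for the second.

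For (1), I would pass to the dual. Writing each $H = \Ker \alpha_H$ with $\alpha_H \in V^*$, the hypothesis $P(H) \subset H$ (equivalently $P(H) = H$, since $P$ is invertible) says precisely that $\alpha_H \circ P$ vanishes on $H$, hence $\alpha_H \circ P = \la_H \alpha_H$ for some scalar $\la_H$; that is, each $\alpha_H$ is an eigenvector of the transpose $P^t \in \GL(V^*)$. Since $\mathcal{A}$ is essential, $\bigcap_H H = \{0\}$, so the annihilator in $V$ of the family $(\alpha_H)$ is trivial and the $\alpha_H$ span $V^*$. Thus $V^*$ admits a spanning set of eigenvectors of $P^t$, so $P^t$---and therefore $P$---is semisimple.

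For (2), suppose $W$ and $W'$ are two reflection groups sharing the arrangement $\mathcal{A}$ and the same function $d$. I would first reduce to a statement about individual reflections: the fixer of $H$ in $W$ is cyclic of order $d_H$, generated by the distinguished reflection $s_H$, and since every reflection lies in the fixer of its own hyperplane, the distinguished reflections $\{ s_H \}_{H \in \mathcal{A}}$ generate $W$ (and likewise $\{ s'_H \}$ generate $W'$). Hence it suffices to prove $s_H = s'_H$ for every $H$. Now $s_H$ and $s'_H$ both fix $H$ pointwise and act on $V/H$ by the same eigenvalue $\zeta_H$; a priori they could differ in their eigenline, so the content is to show these eigenlines coincide. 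To compare them I would set $g = s_H (s'_H)^{-1}$. Since $g$ fixes $H$ pointwise and acts trivially on $V/H$, all its eigenvalues equal $1$ and $g - \Id$ has rank at most one with image inside $H$; thus $g = \Id + v \otimes \alpha_H$ for some $v \in H$, a possibly trivial transvection, with $g^k = \Id + k\,(v \otimes \alpha_H)$ for all $k$. On the other hand both $W$ and $W'$ preserve $\mathcal{A}$, so $g(\mathcal{A}) = \mathcal{A}$; as $\mathcal{A}$ is finite, some power $g^m$ fixes every hyperplane of $\mathcal{A}$. By part (1), $g^m$ is then semisimple---but $g^m$ is unipotent, so $g^m = \Id$, forcing $m\,(v \otimes \alpha_H) = 0$ and hence $v = 0$. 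Therefore $g = \Id$, so $s_H = s'_H$ and $W = W'$.

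The main obstacle is exactly this last step: pinning down the eigenline of the distinguished reflection from the arrangement alone. The crucial input that makes it work is that the discrepancy $g$ is forced to be unipotent---because $s_H$ and $s'_H$ induce the \emph{same} scalar $\zeta_H$ on the quotient---so that once part (1) supplies semisimplicity of a suitable power, unipotence collapses it to the identity; the finiteness of $\mathcal{A}$ is what manufactures that power. It remains to verify the standard facts invoked, namely that the fixer of $H$ is cyclic (an element fixing $H$ and acting trivially on $V/H$ is unipotent, hence trivial inside a finite group, so the fixer embeds in $\C^*$) and that it is generated by $s_H$; these are routine and I would dispatch them briefly before running the argument above.
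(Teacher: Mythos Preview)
Your argument is correct and follows essentially the same path as the paper's: pass to the dual to get semisimplicity in (1), then in (2) compare distinguished reflections via $g = s_H (s'_H)^{-1}$, observe $g$ is unipotent and permutes $\mathcal{A}$, take a power stabilizing each hyperplane, and use (1) to force $g^m = \Id$, hence $g = \Id$. Your explicit transvection description $g = \Id + v \otimes \alpha_H$ and the remarks on cyclic fixers add helpful detail, but the structure of the proof is the same.
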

\begin{proof}
To prove (1), we choose linear forms $\alpha_H \in V^*$ with kernel $H \in \mathcal{A}$. Since
$\mathcal{A}$ is essential, $V^*$ is generated by the $\alpha_H$, hence admits a basis
made out some of them. The assumption then states that the $\alpha_H$ are eigenvectors
for $ ^t P \in \GL(V^*)$, hence $ ^t P$ is semisimple and so is $P$. Now we prove (2),
assuming that $W_1,W_2 \subset \GL(V)$ are two reflection groups with the same data
$(\mathcal{A},d)$. Let $H \in \mathcal{A}$ and $s_i \in W_i$
the distinguished reflection with $\Ker(s_i - 1) = H$. Then $x =s_1 s_2^{-1}$ fixes $H$
and acts by 1 on $V/H$, hence is unipotent. The endomorphism $x \in \GL(V)$
clearly permutes the hyperplanes. Since $\mathcal{A}$ is finite,
some power of $x$ setwise stabilizes every $H \in \mathcal{A}$,
hence is semisimple by (1). Since it is also unipotent
this power of $x$ is the identity, hence $x = \Id$ because
$x$ is unipotent. It follows that $s_1 = s_2$ hence $W_1 = W_2$.
\end{proof}

\section{A consequence of Steinberg lemma}

Let $W \subset \GL(V)$ be a reflection group and $\mathcal{A}$ the
corresponding reflection arrangement.
A basic fact is that the notions of irreducibility
for $W$ and $\mathcal{A}$ coincide and can be checked combinatorially on
some graph.
After recalling a proof of this, we notice a useful consequence.

We endow $V$ with a $W$-invariant hermitian scalar
product. Call $v \in V$ a \emph{root} if it is an eigenvector
of a reflection $s \in V$ such that $s.v \neq v$. For $L$
a finite set of linearly independent roots we let
$V_L$ denote the subspace of $V$ spanned by $L$, and
$\Gamma_L$ the graph on $L$ connecting $v_1$ and $v_2$
if and only if $v_1$ and $v_2$ are not orthogonal.
Notice that, if $s \in W$ is a reflection
with root $v \in V$, the following properties hold : if $v \in V_L$ then $s(V_L) \subset V_L$, because $V_L = (\C v) \oplus (\Ker(s-1) \cap V_L)$ ;
if $v \in V_L^{\perp}$ then $V_L \subset (\C v)^{\perp}$ is pointwise stabilized by $s$.

The following proposition is basic. We provide a proof of $(1) \Leftrightarrow (2)$ for
the convenience of the reader, because of a lack of reference. $(1) \Leftrightarrow (3)$ is due to Steinberg.

\begin{prop} The following are equivalent, for an essential reflection arrangement $\mathcal{A}$.
\begin{enumerate}
\item $W$ acts irreducibly on $V$.
\item $\mathcal{A}$ is an irreducible hyperplane arrangement.
\item $V$ admits a basis $L$ of roots such that $\Gamma_L$ is connected.
\end{enumerate}
\end{prop}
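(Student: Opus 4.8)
The plan is to fix a $W$-invariant hermitian form, so that every reflection $s$ is unitary and its hyperplane equals $(\C v)^\perp$ for $v$ a root of $s$; this lets me pass freely between ``a root lies in a subspace'' and ``a hyperplane contains its orthogonal complement''. I would then establish $(1)\Leftrightarrow(3)$ by the connectedness (Steinberg) argument and $(1)\Leftrightarrow(2)$ directly, the latter being the delicate point.

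For $(3)\Rightarrow(1)$, let $L$ be a basis of roots with $\Gamma_L$ connected and let $U\subseteq V$ be a nonzero $W$-invariant subspace. For each $v_i\in L$ the reflection $s_i$ stabilizes $U$, so $U=(U\cap\C v_i)\oplus(U\cap\Ker(s_i-1))$; hence each $v_i$ lies in $U$ or in $U^\perp$. Since $L$ spans $V$ and $U\neq 0$, at least one $v_i$ lies in $U$; if some $v_j$ lay in $U^\perp$, connectedness of $\Gamma_L$ would produce an edge, i.e. a non-orthogonal pair, between a vertex in $U$ and a vertex in $U^\perp$, which is absurd. So all $v_i\in U$ and $U=V$. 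For $(1)\Rightarrow(3)$, I would take $L$ a set of linearly independent roots with $\Gamma_L$ connected of maximal cardinality (nonempty since $\mathcal A\neq\emptyset$). If $V_L\neq V$, then by the two displayed observations $V_L$ is $W$-stable unless some root $v$ lies neither in $V_L$ nor in $V_L^\perp$; in the first case irreducibility forces $V_L=V$ (as $0\neq V_L$), a contradiction, while in the second $v\notin V_L$ makes $L\cup\{v\}$ independent and $v\not\perp V_L$ supplies an edge to some $v_i$, so $\Gamma_{L\cup\{v\}}$ is still connected, contradicting maximality. Hence $V_L=V$ and $L$ is the desired basis.

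For $(1)\Leftrightarrow(2)$ I would argue both implications by contraposition. If $W$ is reducible, I choose an orthogonal $W$-invariant decomposition $V=V_1\perp V_2$; for each reflection the invariance of the $V_i$ and the eigenspace splitting force its root into $V_1$ or $V_2$ (otherwise both $V_i\subseteq\Ker(s-1)$, so $V=\Ker(s-1)$), whence every hyperplane contains $V_2$ or $V_1$ and $\mathcal A=\mathcal A_1\times\mathcal A_2$; both factors are nonempty because essentiality forbids any nonzero $V_i$ from lying in $\bigcap\mathcal A$. Thus $\mathcal A$ is reducible.

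The reverse implication is the main obstacle, because an abstract isomorphism $\mathcal A\cong\mathcal A_1\times\mathcal A_2$ only yields a decomposition $V=V_1\oplus V_2$ that need not be orthogonal for the invariant form, so one cannot immediately read off a $W$-invariant subspace. After conjugating $W$ I may assume the product is literal. Writing $\mathcal A=\mathcal A'\sqcup\mathcal A''$ with $\mathcal A'=\{H:H\supseteq V_2\}$ and $\mathcal A''=\{H:H\supseteq V_1\}$ (a genuine partition, as no hyperplane contains $V_1+V_2=V$), the key claim I would prove is that every reflection $s$ preserves this partition. Indeed, if the hyperplane of $s$ lies in $\mathcal A'$ then $V_2\subseteq\Ker(s-1)$, so $s$ fixes $V_2$ pointwise and stabilizes $V_2^\perp$; given $H''\in\mathcal A''$ with root $v''\in V_1^\perp$, were $s(H'')\in\mathcal A'$ its root $s(v'')$ would lie in $V_2^\perp$, whence $v''\in V_2^\perp$ and $v''\in V_1^\perp\cap V_2^\perp=(V_1+V_2)^\perp=0$, a contradiction; so $s(\mathcal A'')\subseteq\mathcal A''$, and by bijectivity $s$ preserves the partition (the case $H\in\mathcal A''$ is symmetric). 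Consequently $Z=\bigcap\mathcal A''$ is $W$-invariant, and it is proper and nonzero (it contains $V_1\neq 0$ and is contained in any $H''\in\mathcal A''$), so $W$ is reducible. I expect this claim---that a possibly non-orthogonal product structure on the arrangement is automatically respected by $W$---to be the crux; once it is in place the whole equivalence follows formally.
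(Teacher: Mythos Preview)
Your proof is correct. The treatment of $(1)\Leftrightarrow(3)$ is essentially Steinberg's argument and matches the paper; your contrapositive for $(2)\Rightarrow(1)$ is also the paper's. The real divergence is in $(1)\Rightarrow(2)$ (equivalently, $\mathcal{A}$ reducible $\Rightarrow$ $W$ reducible).

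The paper attacks this by a rank-two parabolic computation: given distinguished reflections $s_1,s_2$ coming from the two factors, it analyses all reflections fixing the codimension-two flat $H_1\oplus H_2$, concludes that $s_2s_1s_2^{-1}$ must be a power of $s_1$, hence $s_1,s_2$ commute and have orthogonal roots; the span of the roots from one factor is then a proper nonzero $W$-invariant subspace. Your route bypasses this commutation step entirely. You observe that a reflection with hyperplane in $\mathcal{A}'$ fixes $V_2$ pointwise, hence (by unitarity) stabilises $V_2^{\perp}$; since roots attached to $\mathcal{A}''$ lie in $V_1^{\perp}$ and $V_1^{\perp}\cap V_2^{\perp}=0$, no such reflection can send a hyperplane of $\mathcal{A}''$ into $\mathcal{A}'$. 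Thus every reflection respects the partition, and $\bigcap\mathcal{A}''$ is the sought $W$-invariant subspace. This is shorter and avoids the somewhat delicate case analysis of which power of which reflection one obtains; the only mild cost is that your invariant subspace $\bigcap\mathcal{A}''$ is defined in terms of the arrangement rather than in terms of roots, so one does not get the extra byproduct (implicit in the paper's argument) that reflections from distinct factors commute. For the purposes of the proposition, your argument is at least as clean.
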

\begin{proof}
In the direction $(2) \Rightarrow (1)$, if $V = V_1 \oplus V_2$ with the $V_i$ being
$W$-stable subspaces, then we define $\mathcal{A}_i = \{ H \in \mathcal{A} \ | \ (s_H)_{|V_i} \neq \Id \}$
with $s_H$ the distinguished reflection w.r.t. $H \in \mathcal{A}$, and we have $\mathcal{A} = \mathcal{A}_1 \times \mathcal{A}_2$.
In the direction $(1) \Rightarrow (2)$, we let $V = V_1 \oplus V_2$ be the decomposition of
$V$ corresponding to $\mathcal{A} = \mathcal{A}_1 \times \mathcal{A}_2$.
We choose a collection of roots for $\mathcal{A}$. Let $s_1,s_2$
be two distinguished reflections associated to $H_1 \in \mathcal{A}_1,H_2 \in \mathcal{A}_2$,
respectively, and let $H = H_1 \oplus H_2 \subset V$. Consider some
reflection $s \in W$ such that $\Ker(s-1) \supset H$. If
$\Ker(s-1)$ can be written as $H_0 \oplus V_2$ with
$H_0$ some hyperplane of $V_1$, then $H_0 \oplus V_2 \supset
H_1 \oplus H_2$ implies $H_0 \supset H_1$, hence $H_0 = H_1$
by equality of dimensions, meaning that $s$ is some power of
$s_1$. Similarly, if $\Ker(s-1)$ can be written as $V_1 \oplus H_0$ with
$H_0$ some hyperplane of $V_2$, then $s$ is a power of $s_2$.
Considering the reflection $s_2 s_1 s_2^{-1}$, which fixes $H$
and has reflecting hyperplane $s_2.\Ker(s_1-1)$,
since $s_1 \neq s_2$ it follows that $s_2 s_1 s_2^{-1}$
is a power of $s_1$. Then
$s_2.\Ker(s_1 - 1) = \Ker(s_1-1)$
hence $s_1 ,s_2$ commute and have orthogonal roots.
The subspace $V_1^0$ spanned by all roots aring from $\mathcal{A}_1$
is thus setwise stabilized by all reflections of $W$, hence $V_1^0 = V$.
On the other hand, the hermitian scalar product induces an isomorphism
between $V_1^0$ and $V_1^*$ (because $\mathcal{A}_1$, like
$\mathcal{A}$, is essential), hence $V_2 \neq \{ 0 \} \Rightarrow V_1^0 \neq V$,
a contradiction.

We now prove $(1) \Leftrightarrow (3)$. Let $L_0$ be of maximal size among the sets $L$ of
linearly independent roots with connected $\Gamma_L$. We prove that $|L| = \dim V$ if $W$ is irreducible.
Indeed, since $W$ is irreducible generated by reflections and $V_{L_0} \subset V$,
there would otherwise exist a reflection $s$ such that $s(V_{L_0}) \not\subset V_{L_0}$.
Letting $v \in V$ be a root of $s$, we have $v \not\in V_{L_0}$ and
$v \not\in (V_{L_0})^{\perp}$. This proves that $L = L_0 \sqcup \{ v \}$
is made out linearly independant roots and that $\Gamma_L$ is connected, since
$v \not\in (V_{L_0})^{\perp}$ cannot be orthogonal to all roots spanning $L_0$
and $L_0$ is already connected. From this contradiction it follows that
$L_0$ has cardinality $\dim V$. Conversely, if $V$ admits a basis
$L$ of roots such that $\Gamma_L$ is connected, then $W$
is irreducible, for otherwise $V = V_1 \oplus V_2$ with
$V_1,V_2$ nontrivial orthogonal $W$-stable subspaces, and
$L = L_1 \sqcup L_2$ with $L_i = \{ x \in L \ | \ x \in U_i \}$.
Then $\Gamma_L = \Gamma_{L_1} \sqcup \Gamma_{L_2}$, contradicting
the connectedness of $\Gamma_L$.
\end{proof}

\begin{cor} \label{corparab} If $W \subset \GL(V)$ is an irreducible reflection group
then it admits an \emph{irreducible} parabolic subgroup
of rank $\dim V - 1$.
\end{cor}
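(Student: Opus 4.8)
The plan is to read off an irreducible parabolic directly from the combinatorial criterion $(1)\Leftrightarrow(3)$ of the previous proposition. Assume $n = \dim V \geq 2$ (for $n=1$ the statement is vacuous). Since $W$ is irreducible, the proposition furnishes a basis $L = \{v_1,\dots,v_n\}$ of $V$ consisting of roots such that the graph $\Gamma_L$ is connected. The idea is to delete one well-chosen root so that the remaining $n-1$ roots still form a connected graph; their orthogonal complement is then a line $\ell$, and I claim that the fixer of $\ell$ is the parabolic we want.

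Concretely, first I would choose a spanning tree $T$ of $\Gamma_L$ and a leaf $v$ of $T$. After relabelling, write $L' = \{v_1,\dots,v_{n-1}\} = L \setminus \{v\}$. Deleting a leaf from a tree leaves a tree, so $T \setminus \{v\}$ is connected and a fortiori $\Gamma_{L'}$ is connected; moreover $L'$ is a set of $n-1$ linearly independent roots. Let $V_{L'}$ be the $(n-1)$-dimensional space they span and set $\ell = V_{L'}^{\perp}$, a line. Because each $v_i \in V_{L'} = \ell^{\perp}$, the reflecting hyperplane $\Ker(s_i - 1) = v_i^{\perp}$ of the corresponding reflection $s_i$ contains $\ell$, so every $s_i$ fixes $\ell$ pointwise. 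Thus, letting $W_0$ denote the fixer of $\ell$ in $W$, which is a parabolic subgroup by Steinberg's theorem, we have $s_1,\dots,s_{n-1} \in W_0$; in particular $W_0$ is nontrivial.

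It remains to check that $W_0$ is irreducible of rank $n-1$, and this is where the argument has to be handled with a little care. Being unitary and fixing $\ell$, the group $W_0$ stabilizes $\ell^{\perp} = V_{L'}$ and acts on it as a reflection group generated by the reflections of $W$ whose root lies in $\ell^{\perp}$; the $v_i$ are among these roots. The arrangement of $W_0$ in $\ell^{\perp}$ is essential, since the roots $v_1,\dots,v_{n-1}$ already span $\ell^{\perp}$, so the common fixed space of $W_0$ inside $\ell^{\perp}$ is $\{0\}$; hence the rank of $W_0$ is exactly $\dim \ell^{\perp} = n-1$. Finally, applying the implication $(3) \Rightarrow (1)$ of the proposition to $W_0$ acting on $\ell^{\perp}$ --- where $L'$ is now a basis of roots whose graph $\Gamma_{L'}$ is connected --- yields that $W_0$ acts irreducibly on $\ell^{\perp}$, i.e.\ $W_0$ is an irreducible parabolic subgroup of rank $n-1$.

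The only genuine obstacle is the bookkeeping in the last paragraph: one must be sure that the reflections of $W$ fixing $\ell$ really do restrict to reflections of an essential arrangement on $\ell^{\perp}$, with the $v_i$ among their roots, so that the proposition can legitimately be re-applied to $W_0$ in dimension $n-1$. Everything else --- pruning the connected graph to a connected subgraph on $n-1$ vertices, and identifying $\ell^{\perp}$ with $V_{L'}$ --- is routine.
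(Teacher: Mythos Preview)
Your proof is correct and follows essentially the same route as the paper's: prune one vertex from the connected graph $\Gamma_L$ to obtain a connected $\Gamma_{L'}$ on $n-1$ roots, take $W_0$ to be the fixer of the orthogonal line $\ell = V_{L'}^{\perp}$, and invoke $(3)\Rightarrow(1)$ on $\ell^{\perp}$. The only differences are that you make the pruning step explicit via a spanning-tree leaf (the paper simply asserts the existence of such an $L_0$) and you spell out the bookkeeping for the rank and essentiality of $W_0$ that the paper leaves implicit.
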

\begin{proof}
Considering a set $L$ of linearly independent roots such that
$\Gamma_L$ is connected, as given by the proposition,
there exists $L_0 \subset L$ with $L = L_0 \sqcup \{ v \}$
such that $\Gamma_{L_0}$ is still connected. Then $V_{L_0}$
has dimension $\dim V - 1$, and its orthogonal is spanned
by some $v' \in V$. Letting $W_0$ denote the parabolic
subgroup fixing $v'$, it has rank $\dim V - 1$,
admits for roots all elements of $L_0$, hence is irreducible
since $\Gamma_{L_0}$ is connected.
\end{proof}

\section{Quadratic forms on $V$}

Let $\mathcal{A}$ be an essential hyperplane arrangement in $V$.
The integer $n = \dim V$ is the \emph{rank} $\rk \mathcal{A}$ of $\mathcal{A}$.
For each $H \in \mathcal{A}$ we let $\alpha_H \in V^*$ denote
some linear form with kernel $H$. For a field $\mathbbm{k}$, we let $\mathbbm{k} \mathcal{A}$ denote a vector
space with basis $v_H, H \in \mathcal{A}$, and define
a linear map $\Phi : \C \mathcal{A} \to S^2 V^*$ by $\Phi(v_H) = \alpha_H^2$.

For $\Phi$ to be onto, it is nessary that $\mathcal{A}$ is
irreducible. Indeed, if $\mathcal{A} = \mathcal{A}_1 \times \mathcal{A}_2$
corresponds to some direct sum decomposition $V = V_1 \oplus V_2$,
then choosing two nonzero linear forms $\varphi_i \in V_i^*$
defines a quadratic form $\varphi_1 \varphi_2 \in S^2 V^*$ which
does not belong to $\Im  \Phi$. This condition is also sufficient in rank 2.

\begin{prop} If $\mathcal{A}$ is essential of rank 2, then $\Phi$
is onto if and only if $\mathcal{A}$ is irreducible.
\end{prop}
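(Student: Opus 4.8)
The necessity of irreducibility is already the general-rank observation recorded just above the statement: if $\mathcal{A} = \mathcal{A}_1 \times \mathcal{A}_2$ comes from $V = V_1 \oplus V_2$ and $\varphi_i \in V_i^*$ are nonzero, then $\varphi_1 \varphi_2 \notin \Im \Phi$. So the whole task is to prove sufficiency: if $\mathcal{A}$ is essential, irreducible and of rank $2$, then $\Phi$ is onto. Since $\dim V = 2$ gives $\dim S^2 V^* = 3$, it is enough to produce three forms $\alpha_H^2$ that are linearly independent in $S^2 V^*$. The first step is therefore a cardinality count: an essential rank-$2$ arrangement has at least two hyperplanes, and if it has exactly two lines $\ell_1, \ell_2$ then $V = \ell_1 \oplus \ell_2$ realizes $\mathcal{A}$ as the nontrivial product of the two rank-$1$ arrangements $\{\{0\}\}$, hence as a reducible arrangement. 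Thus irreducibility forces $|\mathcal{A}| \geq 3$, and I may fix three distinct hyperplanes $H_1, H_2, H_3$ with pairwise non-proportional forms $\alpha_1, \alpha_2, \alpha_3$.

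The core step is to show that $\alpha_1^2, \alpha_2^2, \alpha_3^2$ are independent. Choosing a basis $x, y$ of $V^*$ and writing $\alpha_i = a_i x + b_i y$, the coordinate vector of $\alpha_i^2$ in the basis $(x^2, xy, y^2)$ of $S^2 V^*$ is $(a_i^2, 2 a_i b_i, b_i^2)$. I would then evaluate the $3 \times 3$ determinant of these three rows: in the generic case where each $a_i \neq 0$, factoring $a_i^2$ out of row $i$ and setting $t_i = b_i / a_i$ turns it into $2 \prod_{i < j} (t_j - t_i)$, a rescaled Vandermonde determinant, which is nonzero exactly because the $t_i$ are distinct, i.e. because the lines are distinct. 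A coordinate-free reformulation, which I find cleaner, is that $[a : b] \mapsto [a^2 : ab : b^2]$ parametrizes the Veronese conic in $\mathbb{P}(S^2 V^*) \cong \mathbb{P}^2$, and three distinct points of an irreducible conic can never be collinear; so the three vectors span a plane, that is, they are independent.

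The only real friction is the bookkeeping around the degenerate case in which one form has $a_i = 0$; since the lines are distinct, at most one of them can, so this is dealt with either by a symmetric choice of coordinates or, more painlessly, by the conic argument, which never refers to coordinates at all. Once independence is established, three independent vectors inside the three-dimensional space $S^2 V^*$ necessarily form a basis, so $\Im \Phi = S^2 V^*$ and $\Phi$ is onto, completing the equivalence.
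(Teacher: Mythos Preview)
Your proof is correct and shares the same overall structure as the paper's: observe that an essential irreducible rank-$2$ arrangement must have $|\mathcal{A}| \geq 3$, then show that three squares $\alpha_{H_i}^2$ span the $3$-dimensional space $S^2 V^*$. The execution differs slightly. The paper chooses the first two forms $\alpha_1,\alpha_2$ themselves as a basis of $V^*$, so that the third form reads $\beta = \lambda_1\alpha_1 + \lambda_2\alpha_2$ with both $\lambda_i\neq 0$; expanding $\beta^2$ and subtracting off $\lambda_1^2\alpha_1^2 + \lambda_2^2\alpha_2^2$ immediately yields $\alpha_1\alpha_2 \in \Im\Phi$, completing a basis of $S^2 V^*$ with no determinant in sight. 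Your route instead keeps generic coordinates and verifies independence via the Vandermonde determinant (with the attendant degenerate-case bookkeeping) or, more elegantly, via the Veronese conic observation that three distinct points on a smooth conic in $\mathbb{P}^2$ are never collinear. The paper's choice of basis buys brevity; your Veronese argument buys a coordinate-free picture and explains conceptually why the result is automatic in rank~$2$. Either is entirely adequate here.
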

\begin{proof}
Since $\mathcal{A}$ is essential, $\mathcal{A}$ contains at least two
hyperplanes $H_1,H_2$. We denote $\alpha_i = \alpha_{H_i}$ the corresponding
(linearly independant) linear forms. If $\mathcal{A} = \{ H_1, H_2 \}$,
then $\mathcal{A}$ is obviously reducible, so we may assume that
$\mathcal{A}$ contains at least another hyperplane. Let $\beta$ denote
the corresponding linear form. It can be written as $\beta = \la_1 \alpha_1
+ \la_2 \alpha_2$ with $\la_1 \neq 0$, $\la_2 \neq 0$. Since
$\beta^2 = \la_1^2 \alpha_1 ^2 + 2 \la_1 \la_2 \alpha_1 \alpha_2 + \la_2^2
\alpha_2^2$ and $\alpha_1^2,\alpha_2^2 , \beta^2 \in \Im  \Phi$
we get $\alpha_1 \alpha_2 \in \Im  \Phi$. Since $\alpha_1^2, \alpha_2^2 \in
\Im  \Phi$ and $\alpha_1,\alpha_2$ are linearly independent it
follows that $\Im  \Phi = S^2 V^*$.
\end{proof}

This condition is not sufficient in rank 3, as shows the following
example. Consider in $\C^3$ the central arrangement of polynomial
$xyz(x-y)(y-z)$. The morphism $\Phi$ is obviously not surjective,
as $\dim \C \mathcal{A} = 5$ and $\dim S^2 V^* = 6$. However, $\mathcal{A}$
is irreducible, because its Poincar\'e polynomial is $P_{\mathcal{A}}(t) = (1+t)(1+4t+4t^2)$,
which is not divisible by $(1+t)^2$  --- recall from \cite{OT} that $P_{\mathcal{A}_1 \times
\mathcal{A}_2} = P_{\mathcal{A}_1}  P_{\mathcal{A}_2 }$ and that
$P_{\mathcal{A}}(t)$ is divisible by $1+t$ whenever $\mathcal{A}$ is central.

It is however sufficient when $\mathcal{A}$ is a \emph{reflection arrangement}.

\begin{theo}
Let $\mathcal{A}$ be a (essential) reflection arrangement. Then
$\Phi$ is surjective if and only if $\mathcal{A}$ is irreducible.
\end{theo}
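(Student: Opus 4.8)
The necessity of irreducibility has already been recorded before the statement: a splitting $\mathcal{A} = \mathcal{A}_1 \times \mathcal{A}_2$ produces a form $\varphi_1\varphi_2 \notin \Im \Phi$. So the plan is to prove the converse, that an irreducible reflection arrangement forces $\Phi$ onto, by induction on $n = \rk \mathcal{A}$, the cases $n \leq 2$ being covered by the preceding Proposition. The first move is to dualize. Since $w \cdot \alpha_H^2$ is a scalar multiple of $\alpha_{w(H)}^2$ (being a square of a form vanishing on $w(H)$), the subspace $\Im \Phi = \langle \alpha_H^2 \rangle \subset S^2 V^*$ is a $W$-submodule, independent of the chosen forms. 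Under $(S^2 V^*)^* \cong S^2 V$, surjectivity of $\Phi$ is equivalent to the vanishing of the annihilator $K = \{ \tilde Q \in S^2 V : \tilde Q(\alpha_H,\alpha_H) = 0 \ \forall H \in \mathcal{A}\}$; that is, no nonzero quadratic form on $V^*$ should be isotropic at every $\alpha_H$. The same computation shows that $K$ is itself a $W$-submodule of $S^2 V$, so it suffices to prove $K = 0$.

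For the inductive step, assume $n \geq 3$. I would invoke Corollary \ref{corparab} to produce an irreducible parabolic $W_0$ of rank $n-1$, namely the fixer of some nonzero $v'$, acting irreducibly on $U = (\C v')^{\perp}$ with reflection arrangement $\mathcal{A}_0 = \{ H \cap U : H \in \mathcal{A}, \ v' \in H \}$. For $H \ni v'$ the form $\alpha_H$ lies in $E_{v'} := \{ f \in V^* : f(v') = 0 \}$, which restricts isomorphically to the dual of $U$, and $\alpha_H|_U$ is a defining form of $H \cap U$. Hence any $\tilde Q \in K$ restricts on $E_{v'} \times E_{v'}$ to an element of the analogous annihilator $K_0$ for the essential irreducible reflection arrangement $(W_0, U, \mathcal{A}_0)$; by the induction hypothesis $K_0 = 0$, so $\tilde Q$ vanishes on $E_{v'} \times E_{v'}$. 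Applying this to $w^{-1}\cdot \tilde Q \in K$ and using the $W$-stability of $K$, I conclude that $\tilde Q$ vanishes on $E_u \times E_u$ for every $u$ in the orbit $\Omega = W v'$, where $E_u = \{ f \in V^* : f(u) = 0 \}$.

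The last step is a factorization argument in $S(V) = \C[V^*]$. Regarding $\tilde Q \in K$ as a homogeneous degree-two polynomial on $V^*$ that vanishes on the hyperplane $E_u$ shows that the linear form $u \in V$ divides $\tilde Q$, for every $u \in \Omega$. Because $W$ acts irreducibly, $\Omega$ spans $V$, so for $n \geq 3$ it contains three pairwise non-proportional vectors; a degree-two polynomial divisible by three pairwise coprime linear forms must be $0$. Thus $\tilde Q = 0$, whence $K = 0$ and $\Phi$ is onto.

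I expect the main obstacle to lie in the inductive reduction rather than in the concluding algebra. The point is to use Corollary \ref{corparab} to get an irreducible parabolic of corank exactly one, so that $\mathcal{A}_0$ is again an \emph{irreducible} reflection arrangement of strictly smaller rank to which the induction applies, and then to combine the resulting vanishing on $E_{v'}$ with the $W$-stability of $K$ to spread it across the entire orbit $W v'$. Once the orbit is known to span $V$, the divisibility and the three-factor contradiction are routine.
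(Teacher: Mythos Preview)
Your argument is correct and reaches the same conclusion, but the route differs from the paper's. Both proofs share the inductive skeleton: induct on $n$, invoke Corollary~\ref{corparab} to get an irreducible parabolic $W_0$ of corank one fixing some $v'$, and apply the induction hypothesis to the rank-$(n-1)$ arrangement of $W_0$. The divergence is in how the inductive information is upgraded to rank $n$. The paper works directly with $Q=\Im\Phi$: writing $S^2 V^* = S^2 H_0^* \oplus \alpha H_0^* \oplus \C\alpha^2$ (with $H_0=(\C v')^{\perp}$ and $\Ker\alpha=H_0$), it gets $S^2 H_0^*\subset Q$ from the induction hypothesis, then picks a single $\alpha_H\notin H_0^*\cup\C\alpha$ and uses the $W_0$-irreducibility of $H_0^*$ to show that the differences $w_1.\beta-w_2.\beta$ fill out $H_0^*$, forcing $\alpha H_0^*\subset Q$ and finally $\alpha^2\in Q$. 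You instead dualize to the annihilator $K\subset S^2 V$, observe it is $W$-stable, use induction to kill $\tilde Q$ on $E_{v'}\times E_{v'}$, transport this by $W$ to every $E_u$ with $u\in Wv'$, and finish with a clean UFD argument: $\tilde Q$ is a quadratic divisible by at least three pairwise non-associate linear forms, hence zero.

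What each buys: the paper's argument stays entirely inside $S^2 V^*$ and exploits only the $W_0$-action, never the full $W$-orbit of $v'$; it is constructive in that one sees explicitly how each graded piece of $S^2 V^*$ is reached. Your approach is shorter and more conceptual once the dualization is set up, trading the explicit three-piece decomposition for a one-line degree count; it leans on the full $W$-action (to make $Wv'$ span $V$) rather than just $W_0$. Both are elementary and classification-free.
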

\begin{proof}
We assume that $\mathcal{A}$ is irreducible, and prove that $\Phi$
is surjective by induction on $\rk \mathcal{A}$.
If $\rk \mathcal{A} \leq 2$, this is a consequence of the above proposition,
so we can assume $\rk \mathcal{A} \geq 3$. We denote $W$
the corresponding (pseudo-)reflection group, and endow $V$ with a $W$-invariant
hermitian scalar product. By corollary \ref{corparab} there exists an irreducible
maximal parabolic subgroup $W_0 \subset W$, defined by
$W_0 = \{ w \in W \ | \ w.v = v \}$ for some $v \in V \setminus \{ 0 \}$.
We let $H_0 = (\C v)^{\perp}$. By Steinberg theorem $W_0$ is a reflection
group, whose (pseudo-)reflections are the reflections of $W$ contained
in $W_0$. Let $\mathcal{A}_0 \subset \mathcal{A}$ denote
the arrangement in $V$ corresponding to $W_0$. Since $v \in H$ 
for all $H \in \mathcal{A}_0$, by the induction hypothesis we have
$Q \subset S^2 H_0^*$, where $Q = \Im  \Phi$ and
$S^2 H_0^* \subset S^2 V^*$ is induced by $H^* \subset V^*$, letting
$\gamma \in H_0^*$ act on $H_0^{\perp}$ by 0.
Let $\alpha \in V^* \setminus \{ 0 \}$ such that $H_0 = \Ker \alpha$.
We have $S^2 V^* = S^2 H_0^* \oplus \alpha H_0^* \oplus \C \alpha^2$.
Since $\mathcal{A}$ is irreducible, there exists
$H \in \mathcal{A}$ such that $\alpha_H \not\in \C \alpha$
and $\alpha_H \not\in S^2 H_0^*$. Such a linear form can be written
$\la (\alpha + \beta)$ with $\la \in \C \setminus \{ 0 \}$ and
$\beta \in S^2 H_0^* \setminus \{ 0 \}$. Then $(\alpha + \beta)^2 \in Q$
and $\beta^2 \in Q$, so we have $\alpha^2 + 2 \alpha \beta \in Q$.
We make $W$ act on $V^*$ by $w.\gamma(x) = \gamma(w^{-1}.x)$,
for $x \in V$, $\gamma \in V^*$. Of course this action
can be restricted to a $W_0$-action on $H_0^* \subset V^*$. Then $w.(\alpha + \beta) \in Q$
for all $w \in W$, and since $w. \alpha = \alpha$ whenever
$w \in W_0$, we get $\alpha^2 + 2 \alpha(w.\beta) \in Q$
for all $w \in W_0$. Consider now the subspace $U$ of
$H^*$ spanned by the $w_1.\beta - w_2.\beta$ for $w_1,w_2 \in W_0$.
It is a $W_0$-stable subspace of $H_0^*$. Recall that $H_0$, hence
$H_0^*$, is irreducible under the action of $W_0$. If $U = \{ 0 \}$ then
$w.\beta = \beta$ for all $w \in W_0$, hence $H_0 = \C \beta$
and $\dim V = 2$, which has been excluded. Thus $U \neq \{ 0 \}$
hence $U = H_0^*$. By $2\alpha(w_1.\beta - w_2.\beta) =
(\alpha^2 + 2 \alpha (w_1.\beta)) - (\alpha^2 + 2 \alpha (w_2.\beta))$
we thus get $\alpha H_0^* \subset Q$. Then $(\alpha + \beta)^2 
\in \alpha^2 + \alpha H_0^* + S^2 H_0^* \subset \alpha^2 + Q$
implies $\alpha^2 \in Q$. It follows that $Q \supset S^2 V^*$
which concludes the proof.
\end{proof}

\begin{cor} If $\mathcal{A}$ is an irreducible reflection arrangement of
rank $n$, then $|\mathcal{A} | \geq n(n+1)/2$.
\end{cor}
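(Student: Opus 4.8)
The plan is to read this off immediately from the preceding theorem by pure dimension counting. The theorem just established that, for an irreducible (essential) reflection arrangement $\mathcal{A}$, the linear map $\Phi : \C\mathcal{A} \to S^2 V^*$ sending $v_H \mapsto \alpha_H^2$ is surjective. A surjective $\C$-linear map can only exist when the source has dimension at least that of the target, so the corollary should follow the moment we identify the two dimensions involved.

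First I would record that $\dim_{\C} \C\mathcal{A} = |\mathcal{A}|$, since $\C\mathcal{A}$ is by construction the vector space with basis $\{ v_H \mid H \in \mathcal{A} \}$. Next I would note that $\dim_{\C} S^2 V^* = n(n+1)/2$, the standard dimension of the space of quadratic forms (equivalently, symmetric bilinear forms) on an $n$-dimensional space, with $n = \rk \mathcal{A} = \dim V$.

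Then, invoking the theorem, $\Phi$ is onto because $\mathcal{A}$ is irreducible, whence $\Im \Phi = S^2 V^*$ and
\[
|\mathcal{A}| = \dim_{\C} \C\mathcal{A} \;\geq\; \dim_{\C} \Im \Phi \;=\; \dim_{\C} S^2 V^* \;=\; \frac{n(n+1)}{2},
\]
which is exactly the asserted bound.

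I do not expect any genuine obstacle here: all the substantive work was carried out in proving surjectivity of $\Phi$, and the corollary is merely the rank inequality for a surjection. The only points warranting a word of care are the two dimension identifications, both of which are standard, and the implicit reminder that the rank used in the statement coincides with $\dim V$ for an essential arrangement. Sharpness (the equality case for type $A_n$) is already discussed in the introduction and need not be re-proved here.
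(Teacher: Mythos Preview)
Your proposal is correct and is exactly the argument the paper has in mind: the corollary is stated immediately after the theorem with no separate proof, precisely because it follows from the surjectivity of $\Phi$ by comparing $\dim \C\mathcal{A} = |\mathcal{A}|$ with $\dim S^2 V^* = n(n+1)/2$.
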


Notice that the above lower bound is sharp, as it is reached
for Coxeter type $A_n$.

When $\mathcal{A}$ is a reflection arrangement
with corresponding reflection group $W$, both
$\C \mathcal{A}$ and $S^2 V^*$ can be endowed by natural $W$-actions,
where the action on $\C \mathcal{A}$ is defined by $w.v_H = v_{w(H)}$.
It is thus natural to ask whether the linear forms
$\alpha_H$ can be chosen such that $\Phi$ is a morphism
of $W$-modules.

\begin{prop} \label{propequivPhiCox}
If $\mathcal{A}$ is a complexified \emph{real} reflection arrangement
(in particular $W$ is a finite Coxeter group), then
the linear forms $\alpha_H$ can be chosen such that $\Phi$
is a morphism of $W$-modules.
\end{prop}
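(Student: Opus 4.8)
The plan is to translate $W$-equivariance of $\Phi$ into a concrete normalization condition on the forms $\alpha_H$, and then to produce such a normalization using the real structure. With $W$ acting on $V^*$ by $(w.\gamma)(x) = \gamma(w^{-1}.x)$ and on $\C\mathcal{A}$ by $w.v_H = v_{w(H)}$, I would first note that $w.\Phi(v_H) = w.(\alpha_H^2) = (w.\alpha_H)^2$ (the action on $S^2V^*$ being the symmetric square of the action on $V^*$), whereas $\Phi(w.v_H) = \alpha_{w(H)}^2$. Since $w.\alpha_H$ and $\alpha_{w(H)}$ both have kernel $w(H)$, they differ by a nonzero scalar $c(w,H)$, and equivariance of $\Phi$ (checked on the basis $v_H$ and extended by linearity) is therefore \emph{equivalent} to the ability to choose the family $(\alpha_H)$ so that $c(w,H)^2 = 1$, i.e.\ so that $w.\alpha_H = \pm\,\alpha_{w(H)}$ for all $w \in W$ and all $H \in \mathcal{A}$.

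The construction achieving this exploits a $W$-invariant real inner product. As $\mathcal{A}$ is the complexification of a real arrangement, I would write $V = V_{\R} \otimes_{\R} \C$ with $W$ preserving $V_{\R}$, and fix a $W$-invariant positive-definite inner product on $V_{\R}$, available by averaging over $W$. For each $H \in \mathcal{A}$ I would take $\alpha_H = \langle e_H, \cdot\rangle$, where $e_H \in V_{\R}$ is a unit normal vector to the real hyperplane $H \cap V_{\R}$; this has kernel $H$ since $H = e_H^{\perp}$. The only remaining freedom is the choice of sign of $e_H$, which does not affect $\alpha_H^2$, so $\Phi$ is well defined independently of these choices.

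It then remains to verify the relation $w.\alpha_H = \pm\,\alpha_{w(H)}$. Because $W$ acts orthogonally on $V_{\R}$, the computation $(w.\alpha_H)(x) = \alpha_H(w^{-1}.x) = \langle e_H, w^{-1}.x\rangle = \langle w.e_H, x\rangle$ shows that $w.\alpha_H$ is the real unit-norm form dual to $w.e_H$; and $w.e_H$ is a unit vector orthogonal to $w(H)$, hence equals $\pm\,e_{w(H)}$. Consequently $w.\alpha_H = \pm\,\alpha_{w(H)}$, so $(w.\alpha_H)^2 = \alpha_{w(H)}^2$, that is $w.\Phi(v_H) = \Phi(w.v_H)$, which is exactly the desired $W$-equivariance.

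The step that carries the real conceptual weight — and the one I would flag as the main obstacle — is not any computation but the recognition of \emph{why the real hypothesis is essential}. Normalizing a real normal vector to unit length leaves only the ambiguity group $\{\pm 1\}$, and the squaring in $\Phi$ annihilates it; but normalizing a complex vector to unit Hermitian norm leaves the full phase ambiguity, for which $c(w,H)^2 = 1$ can genuinely fail. This is precisely the feature that distinguishes Coxeter groups (all reflections of order $2$, so that the sign normalization above closes up globally) from general complex reflection groups, and it is this global solvability of $c(w,H)^2=1$ that the periodicity value $\kappa(W)=2$ later measures.
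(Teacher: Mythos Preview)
Your proposal is correct and follows essentially the same route as the paper: pick unit normal vectors $e_H$ in the real form with respect to a $W$-invariant Euclidean inner product, set $\alpha_H = \langle e_H,\,\cdot\,\rangle$, and observe that $w.e_H$ is again a real unit normal to $w(H)$, hence $w.e_H = \pm e_{w(H)}$ and $(w.\alpha_H)^2 = \alpha_{w(H)}^2$. Your added remarks---the reformulation of equivariance as $c(w,H)^2=1$ and the explanation of why the real hypothesis collapses the phase ambiguity to $\{\pm 1\}$---are accurate and make the role of the Coxeter assumption more explicit than the paper does.
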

\begin{proof}
We choose a $W$-invariant scalar product on the original real form $V_0$ of
$V$ and extend it to a $W$-invariant hermitian scalar product on $V$.
For every $H \in \mathcal{A}$ we choose $x_H \in V_0$ orthogonal
to $H$ with norm 1, and define $\alpha_H : y \mapsto (x|y)$,
our convention on hermitian scalar products being that they are linear
on the right. Then, for any $w \in W$,  $w. x_H \in V_0$ is orthogonal
to $w(H)$ of norm 1, hence $w.x_H = \pm x_{w(H)}$. Since
$w. \alpha_H$ maps $y$ to $(w.x_H | y)$ we have $(w.\alpha_H)^2 = 
\alpha_{w(H)}^2$, which shows that $\Phi$ is a morphism
of $W$-modules.
\end{proof}

When $W$ is not a Coxeter group, the $W$-modules $\C \mathcal{A}$ and $S^2 V^*$ are
generally unrelated. However, this property is not a characterization
of Coxeter groups, as there is at least one example of a (non-Coxeter)
complex reflection group for which $\Phi$ can be a morphism
of $W$-module. This is the group labelled $G_{12}$ in the
Shephard-Todd classification. Notice that, in such a case,
one must have $\sum \alpha_H^2 = 0$, otherwise
this sum would provide a copy of the trivial
representation inside $S^2 V^*$, forcing $W$
to be a real reflection group.

We briefly describe this example. The group
$G_{12}$ can be described in $\GL_2(\C)$
by 3 generators $a,b,c$ of order 2, satisfying the
relation $abca=bcab=cabc$. We choose the following model :
$$
a = \begin{pmatrix} 1 & 1 + \sqrt{-2} \\ 0 & -1 \end{pmatrix}
b = \begin{pmatrix} -1 & 0 \\ 1-\sqrt{-2} & 1 \end{pmatrix}
c = \begin{pmatrix} \sqrt{-2} & -1 + \sqrt{-2} \\ -1 - \sqrt{-2} & -\sqrt{-2} \end{pmatrix}
$$
We define a collection of vectors $e_H \in V$, such that $w.e_H = \pm e_{w(H)}$. Letting
$\alpha_H : x \mapsto (e_H|x)$, the associated $\Phi : \C \mathcal{A} \to S^2 V^*$
is then a morphism of $W$-modules. A $W$-invariant hermitian scalar product is
given on this matrix model by $(X|Y) = ^t \bar{X} A Y$ with
$$
A = \begin{pmatrix} 2 & 1 + \sqrt{-2} \\ 1- \sqrt{-2}& 2 \end{pmatrix}
$$
We choose for $e_H$ the 12
following vectors, which are fixed by the
corresponding reflection $s$.

$$
\begin{array}{|c||c|c|c|}
\hline
s & babab & a&  b  \\ 
e_H & (1+\sqrt{-2},-2) &(1,0)&(0,1) \\
\hline
\hline
s &  ababa& bcb& c \\ 
e_H & (-2,1-\sqrt{-2})&(1,\sqrt{-2}) &(1,-1) \\
\hline
\hline
s & acaca & cbc & aba  \\
e_H & (1-\sqrt{-2},1+\sqrt{-2}) & (-1+\sqrt{-2},-\sqrt{-2}) & (-1-\sqrt{-2},1)  \\
\hline
\hline
s &  bab & cac & aca \\
e_H & (-1,1-\sqrt{-2})&
(-\sqrt{-2},1+\sqrt{-2})&(-\sqrt{-2},1) \\
\hline
\end{array}
$$
It can be checked that the
reflections $a,b,c$ act on these vectors by monomial matrices,
with nonzero entries in $\{ \pm 1 \}$ (hence factors through the
hyperoctahedral group of rank 12). On this example, $S^2 V^*$
is a selfdual $W$-module.

We make the following remark.

\begin{prop} \label{propPhiEqKap2} For $\Phi$ to be a morphism of $W$-modules
it is necessary that $\kappa(W) \leq 2$, where
$$
\kappa(W) = \min \{ n \in \Z_{>0} \ | \forall w \in W \ \forall H \in \mathcal{A} \ \ 
w.\alpha_H = \zeta \alpha_H \Rightarrow \zeta^n = 1 \} 
$$
\end{prop}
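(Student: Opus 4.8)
The plan is to unwind the meaning of ``$\Phi$ is a morphism of $W$-modules'' on the basis $(v_H)$ of $\C\mathcal{A}$, and then to compare the resulting condition directly with the defining property of $\kappa(W)$. The two turn out to meet precisely on the elements of $W$ that stabilize a hyperplane setwise, which is exactly the situation the definition of $\kappa(W)$ isolates.

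First I would compute both sides of the morphism identity on a generator $v_H$. Using the action $w.v_H = v_{w(H)}$ on $\C\mathcal{A}$ one gets $\Phi(w.v_H) = \Phi(v_{w(H)}) = \alpha_{w(H)}^2$, while the action on $S^2 V^*$ induced by $w.\gamma(x) = \gamma(w^{-1}.x)$ gives $w.\Phi(v_H) = (w.\alpha_H)^2$. Hence $\Phi$ is a $W$-morphism exactly when $(w.\alpha_H)^2 = \alpha_{w(H)}^2$, equivalently $w.\alpha_H = \pm\,\alpha_{w(H)}$, for all $w \in W$ and all $H \in \mathcal{A}$.

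Next I would record that the scalar $\zeta$ in the definition of $\kappa(W)$ is intrinsic. Since $\Ker(w.\alpha_H) = w(H)$, the relation $w.\alpha_H = \zeta\alpha_H$ (with $\zeta \neq 0$, automatic as $w$ is invertible) forces $w(H) = H$; and rescaling $\alpha_H$ leaves $\zeta$ unchanged, so $\zeta$ is simply the eigenvalue by which $w^{-1}$ acts on the quotient line $V/H$, independent of every choice. In particular $\kappa(W)$ does not depend on the chosen linear forms, so the statement is well posed.

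The conclusion then follows by specializing the morphism condition to these stabilizers. Assume the $\alpha_H$ are chosen so that $\Phi$ is a $W$-morphism, and let $w \in W$, $H \in \mathcal{A}$ satisfy $w.\alpha_H = \zeta\alpha_H$. By the previous paragraph $w(H) = H$, so the morphism identity yields $w.\alpha_H = \pm\,\alpha_{w(H)} = \pm\,\alpha_H$; comparing with $w.\alpha_H = \zeta\alpha_H$ gives $\zeta = \pm 1$, whence $\zeta^2 = 1$. Thus $n = 2$ satisfies the condition defining $\kappa(W)$, and therefore $\kappa(W) \leq 2$. There is no real obstacle here: the argument is a direct translation between the two definitions, and the only point requiring attention is the elementary observation that ``$w.\alpha_H$ is proportional to $\alpha_H$'' is the same as ``$w$ fixes $H$ setwise'', which is exactly the case of the morphism condition in which $w(H) = H$.
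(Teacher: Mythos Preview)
Your argument is correct. The paper states this proposition as a remark and gives no explicit proof, treating it as immediate from the definitions; your write-up is precisely the natural unwinding the author has in mind (equivariance of $\Phi$ forces $w.\alpha_H = \pm\alpha_{w(H)}$, and specializing to $w(H)=H$ gives $\zeta^2=1$).
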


Using the Shephard-Todd classification, we will show in section 6
that this condition is actually sufficient when $W$ is irreducible.

\section{A path between representations}

In this section we define a natural connection between
the action of $W$ on $\C \mathcal{A}$ and more
surprising representations of $W$. For
this we need to introduce the space $X = V \setminus
\bigcup \mathcal{A}$ of regular vectors, on which
$W$ acts freely, and its quotient (orbit) space
$X/W$. We choose a base point $\underline{z} \in X$. The fundamental groups $B = \pi_1(X/W)$ and $P = \pi_1(X)$
are known as the braid group and pure braid group
associated to $W$, respectively. There is a natural morphism
$\pi : B \to W$ with kernel $P$. We first construct a deformation
of $W \to \GL(\C \mathcal{A})$ as a linear representation
of the braid group. This deformation should
not be confused with the one described in \cite{KRAMCRG} when
$W$ is a 2-reflection group.

\subsection{A representation of the braid group}

To each $H \in \mathcal{A}$ is canonically associated
a differential form $\om_H = \frac{\dd \alpha_H}{\alpha_H}$,
using some arbitrary linear form $\alpha_H$ with kernel
$\alpha_H$. We introduce idempotents
$p_H \in \End(\C \mathcal{A})$ defined by
$p_{H_1}.v_{H_2} = v_{H_2}$ if $H_1 = H_2$,
$p_{H_1}.v_{H_2} = 0$ otherwise. Choosing $h \in \C$,
the 1-form
$$
\om = h \sum_{H \in \mathcal{A}} p_H \om_H \in \Omega^1(X) \otimes \gl(\C \mathcal{A})
$$
satisfies $\om \wedge \om = 0$, hence defines a flat connection on the trivial
vector bundle $X \times \C \mathcal{A} \to X$, which is clearly $W$-equivariant
for the diagonal action on $X \times \C \mathcal{A}$. Dividing out by $W$,
the corresponding flat bundle over $X/W$
thus defines by monodromy a linear representation of $B$ in $\C \mathcal{A}$.
Letting $\gamma$ denote a representative loop of $\sigma \in B = \pi_1(X/W)$,
we can lift it to a path $\tilde{\gamma}$ in $X$ with endpoints
$\underline{z}$ and $\pi(\sigma).\underline{z}$, where $\underline{z}$
is the chosen basepoint in $X$. The 1-forms $\tilde{\gamma}^* \om_H$
can be written as $\gamma_H(t) \dd t$ for some function $\gamma_H$
on $[0,1]$, and the differential equation $\dd f = (\gamma^* \om)f$
to consider is then $f'(t) = h(\sum_{H \in \mathcal{A}} \gamma_H(t) p_H)f(t)$,
with $f(0) = \Id \in \End(\C \mathcal{A})$. Since the $p_H$ commute one to the other, the
solution is easy to compute :
$$
f(t) = \prod_{H \in \mathcal{A}} \exp\left( h p_H \int_0^t \gamma_H(u) \dd u \right)
$$
and the monodromy representation is given by
$$\sigma \mapsto R_h(\sigma) = \pi(\sigma) \prod_{H \in \mathcal{A}} \exp(h p_H \int_{\gamma} \om_H)$$
where we identified $w \in W$ with $R_0(w) \in \End(\C \mathcal{A})$. In particular, the image of $P$ is commutative. More precisely, if $\gamma_0$ is a loop in $X$
around a single hyperplane $H$, the class $[\gamma_0] \in P$ is mapped to $\exp(2 \ii \pi h p_H)$.
Since $P$ is generated by such classes, it follows that $R_n(P) = \{ \Id \}$ hence
$R_n$ factors through a representation of $W$ whenever $n \in \Z$.

We recall that $B$ is generated by so-called braided reflections
(`generators-of-the-monodromy' in \cite{BMR}), which are defined
as follows. For a distinguished reflection $s \in W$, an element $\sigma \in B$
with $\pi(\sigma) = s$ is called a braided reflection if it admits
as representative a path $\gamma$ from $\underline{z}$ to $s. \underline{z}$
which is a composite $(s.\gamma_0)^{-1}* \gamma_1 * \gamma_0  $
of paths with the following properties. Here $\gamma_0 : \underline{z} \rightsquigarrow 
\underline{z}_0$, $\gamma_1 : \underline{z_0} \rightsquigarrow 
s.\underline{z_0}$ and $(s.\gamma_0)^{-1} : s.\underline{z}_0 \rightsquigarrow 
s.\underline{z}$ is the reverse path of $s.\gamma_0$,
and $\gamma_1(t) = \eps \exp(2 \ii \pi t/d_H) \underline{z_0}^- + \underline{z_0}^+$
where $\underline{z_0}^+$ and $\underline{z_0}^-$ are the orthogonal
projection on $H$ and $H^{\perp}$, respectively, for $\eps>0$ small enough and
$\underline{z_0}$ sufficiently close to $H$ so that the homotopy class of this
path does not vary when $\eps$ decreases and $\underline{z_0}^+ \not\in H'$ for
$H' \in \mathcal{A} \setminus \{ H \}$.

Note that $\int_{s. \gamma_0} \om_{H'} = \int_{\gamma_0} \om_{s(H')}$
for all $H' \in \mathcal{A}$, hence $\int_{\gamma} \om_H =
\int_{\gamma_1} \om_H = (2 \ii \pi)/d_H$. In particular,
for such a braided reflection $\sigma$ we get

$$
R_h(\sigma).v_H = \pi(\sigma) \exp (h p_H \int_{\gamma} \om_H) v_H = \exp( 2 \ii \pi h /d_H) v_H.$$ 
\noindent Moreover, if $H$ and $H'$ have orthogonal
roots, then again $\int_{\gamma} \om_{H'} = \int_{\gamma_1} \om_{H'}$. But
in this case $\alpha_{H'}(\gamma_1(t))$ is constant hence $\int_{\gamma} \om_{H'} = 0$.
An immediate consequence of this is that we can restrict ourselves to irreducible groups, namely

\begin{prop} \label{propR1deco} If $W = W_1 \times \dots \times W_r$
is a decomposition of $W$ in irreducible components, with corresponding decompositions
$B = B_1 \times \dots \times B_k$ and $\mathcal{A} = \mathcal{A}^1 \times
\dots \mathcal{A}^r$,
then $R_h = R_{h}^{(1)} \times \dots \times R_{h}^{(r)}$ with $R_{h}^{(k)} : W_k \to \GL(\C \mathcal{A}^k)$.
\end{prop}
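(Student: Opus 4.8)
The plan is to exploit the explicit monodromy formula $R_h(\sigma) = \pi(\sigma) \prod_{H \in \mathcal{A}} \exp(h p_H \int_\gamma \om_H)$ together with the vanishing of cross-integrals for orthogonal roots established just above. First I record the underlying vector-space decomposition: since the hyperplanes of a product arrangement form the disjoint union of those coming from each factor, we have $\C \mathcal{A} = \bigoplus_k \C \mathcal{A}^k$ as vector spaces, the idempotent $p_H$ with $H \in \mathcal{A}^k$ being supported on the summand $\C \mathcal{A}^k$. Endowing $V$ with a $W$-invariant hermitian product, the decomposition $V = V_1 \oplus \dots \oplus V_r$ is orthogonal, and the root of any reflection whose hyperplane lies in $\mathcal{A}^k$ belongs to $V_k$ (that hyperplane contains $\bigoplus_{i \neq k} V_i$, so its root, being orthogonal to the hyperplane, lies in $V_k$). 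In particular, roots attached to hyperplanes of distinct components are orthogonal.

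Next I use that $B = B_1 \times \dots \times B_r$ is generated by braided reflections, each of which lies in a single factor $B_k$ and is attached to a distinguished reflection of $W_k$, hence to a hyperplane $H \in \mathcal{A}^k$. Fix such a braided reflection $\sigma$ with representative path $\gamma$. For every $H' \in \mathcal{A}^j$ with $j \neq k$, the roots of $H$ and $H'$ are orthogonal by the previous paragraph, so the computation preceding the proposition gives $\int_\gamma \om_{H'} = 0$, whence the factor $\exp(h p_{H'} \int_\gamma \om_{H'})$ is the identity. Thus the product defining $R_h(\sigma)$ collapses to the factors indexed by $\mathcal{A}^k$. Moreover $\pi(\sigma)$ lies in $W_k$, which permutes the hyperplanes of $\mathcal{A}^k$ and fixes every hyperplane of $\mathcal{A}^j$ for $j \neq k$ (each such hyperplane contains $V_k$ and is preserved by $W_k$).

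I then conclude that $R_h(\sigma)$ is block-diagonal for $\C \mathcal{A} = \bigoplus_k \C \mathcal{A}^k$: on a summand $\C \mathcal{A}^j$ with $j \neq k$ both $\pi(\sigma)$ and all surviving idempotents $p_{H'}$, $H' \in \mathcal{A}^k$, act as the identity, while on $\C \mathcal{A}^k$ the operator coincides with the corresponding generator of $R_h^{(k)}$. Since the braided reflections generate $B$, this block structure propagates to all of $B$, yielding $R_h = R_h^{(1)} \times \dots \times R_h^{(r)}$.

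The step requiring care is the identification of the diagonal block with the intrinsic representation $R_h^{(k)}$ of $B_k$: one must check that the integrals $\int_\gamma \om_{H'}$ for $H', H \in \mathcal{A}^k$ agree with those computed from the braid group of $W_k$ acting on $X_k = V_k \setminus \bigcup \mathcal{A}^k$. This becomes transparent once one observes that a vector is regular for $\mathcal{A}$ if and only if each of its components is regular for the corresponding $\mathcal{A}^k$, so that $X = X_1 \times \dots \times X_r$ and hence $B = \prod_k \pi_1(X_k/W_k) = \prod_k B_k$ canonically. Under this product structure $\om_{H'}$ with $H' \in \mathcal{A}^k$ is pulled back from $X_k$, so the $k$-th block of the connection $\om$ is exactly the pullback of the connection defining $R_h^{(k)}$, and the monodromy of a product connection is the product of the monodromies. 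This product description is the conceptual heart of the argument and is what allows all later statements to be reduced to the irreducible case.
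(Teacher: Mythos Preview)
Your argument is correct and follows exactly the route the paper intends: the paper does not supply a separate proof environment but simply states the proposition as ``an immediate consequence'' of the vanishing $\int_\gamma \om_{H'}=0$ for orthogonal roots, and your write-up is a faithful, detailed expansion of that one-line deduction (orthogonality of roots across components, block-diagonality on generators, and identification of the blocks via $X=\prod_k X_k$).
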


From the formulas above follows that, under the action of $R_h$,
$\C \mathcal{A}$ is the direct sum of the stable
subspaces $\C \mathcal{A}_k$, where $\mathcal{A} = \mathcal{A}_1
\sqcup \dots \sqcup \mathcal{A}_r$ is the decomposition
of $\mathcal{A}$ in orbits under the action of $W$. We let
$R_h^k : B \to \GL(\C \mathcal{A}_k)$, so that
$R_h = R_h^1 \oplus \dots \oplus R_h^r$.

\begin{prop} If $h \not\in \Z$, then $R_h^k$ is irreducible for each $1 \leq k \leq r$.
\end{prop}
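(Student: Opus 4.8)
The plan is to control $R_h^k$ by restricting it to the pure braid group $P$, where the monodromy is completely explicit. Recall from the construction that the class of a meridian loop around a single hyperplane $H$ is sent to $\exp(2\ii\pi h\, p_H)$, and that such classes generate $P$. Since the $p_H$ are commuting idempotents that are diagonal in the basis $(v_H)$, the image $R_h(P)$ is a commuting family of diagonalizable operators: each generator $\exp(2\ii\pi h\, p_H)$ acts on $v_H$ by the scalar $\lambda := \exp(2\ii\pi h)$ and fixes every $v_{H'}$ with $H' \neq H$. So $R_h(P)$ is simultaneously diagonalized by the basis $(v_H)$, and the character of $P$ on the line $\C v_H$ sends the meridian around $H$ to $\lambda$ and all other meridians to $1$.

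The hypothesis $h \notin \Z$ enters exactly at this point, since it forces $\lambda \neq 1$, and therefore the characters attached to distinct hyperplanes are pairwise distinct. Hence the lines $\C v_H$ are precisely the (pairwise non-isomorphic, one-dimensional) joint eigenspaces of $R_h(P)$, and any $R_h^k(P)$-stable subspace $U \subseteq \C\mathcal{A}_k$ is a coordinate subspace $U = \bigoplus_{H \in S} \C v_H$ for some $S \subseteq \mathcal{A}_k$. To finish I would bring in the full braid group through the braided reflections, which generate $B$. For such a $\sigma$ with $\pi(\sigma) = s$, the formula $R_h(\sigma) = \pi(\sigma)\prod_{H}\exp(h\, p_H \int_{\gamma}\om_H)$ shows that $R_h(\sigma)$ is monomial: it sends each $v_{H'}$ to a nonzero multiple of $v_{s(H')}$, hence carries the line $\C v_{H'}$ onto $\C v_{s(H')}$. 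Taking $U=\bigoplus_{H\in S}\C v_H$ to be any nonzero $R_h^k(B)$-submodule, its $B$-stability makes it stable under every such $R_h(\sigma)$, so $S$ is stable under each $s=\pi(\sigma)$. As the distinguished reflections $\pi(\sigma)$ generate $W$, the set $S$ is $W$-stable; since $\mathcal{A}_k$ is a single $W$-orbit, $S = \mathcal{A}_k$, whence $U = \C\mathcal{A}_k$ and $R_h^k$ is irreducible.

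The computation is essentially mechanical once the diagonal description of $R_h(P)$ is in hand; the single step that genuinely uses the hypothesis, and that I would state with care, is the passage from ``$U$ is $P$-stable'' to ``$U$ is a coordinate subspace''. It relies on the two facts that $P$ is generated by the meridians (so $R_h(P)$ really is the diagonal group above) and that $\lambda \neq 1$ separates the one-dimensional weights. If instead $h \in \Z$, both ingredients fail: $R_h(P)$ becomes trivial and $R_h^k$ reduces to the permutation $W$-module on the orbit $\mathcal{A}_k$, which is in general reducible, so the hypothesis $h \notin \Z$ is sharp.
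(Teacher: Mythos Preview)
Your proof is correct and follows the same strategy as the paper: identify the image of $P$ as the diagonal group generated by the $\exp(2\ii\pi h\,p_H)$ (where $h\notin\Z$ makes these nontrivial), then use the monomial action of braided reflections together with the transitivity of $W$ on $\mathcal{A}_k$ to conclude. The paper packages the endgame as a graph-connectedness criterion for irreducibility (invoking an external lemma), whereas your direct argument that the $P$-weight lines $\C v_H$ are pairwise non-isomorphic forces any invariant subspace to be a coordinate subspace with $W$-stable index set; the two formulations are equivalent, and yours is self-contained.
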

\begin{proof}
For each $H \in \mathcal{A}_k$ we choose a loop $\gamma_H$
based at $\underline{z}$ around the hyperplane $H$,
We have $\int_{\gamma_H} \om_H = 2 \ii \pi$ and
$\int_{\gamma_H} \om_{H'} = 0$ for $H \neq H'$. Letting $Q_H$
denote the class of $\gamma_H$ in $P = \pi_1(X,\underline{z})$ 
we thus have $R_h^k(Q_H) = \exp(2 \ii \pi h p_H)$, hence
$R_h^k(Q_H) - \Id$ is a nonzero multiple of $p_H$ if $h \not\in \Z$.
It follows that the elements $R_h^k(Q_H)$ generate the commutative
algebra of diagonal matrices in $\End(\C \mathcal{A}_k)$.
Let $\mathcal{G}_k$ be the oriented graph
on the $v_H, H \in \mathcal{A}_k$ with an edge
$(v_{H_1},v_{H_2})$ if there exists $x \in B$ such that
the matrix $R_h^k(x)$ has nonzero entry at $(v_{H_1},v_{H_2})$.
If $\mathcal{G}_k$ is connected, then $R_h^k$ is irreducible
(see e.g. \cite{IRRED} prop. 3 cor. 2).
Choosing
for each distinguished reflection $s \in W$ a braided reflection $\sigma$,
$R_h^{k}(\sigma)$ has nonzero entries in $(v_H,v_{s(H)})$
and $(v_{s(H)},v_H)$ for each $H \in \mathcal{A}$. Since $\mathcal{A}_k$ is
an orbit under $W$ and $W$ is generated by distinguished reflections, it follows that
$\mathcal{G}_k$ is connected, concluding the proof.
\end{proof}

Since $R_h$ factors through $W$ when $h \in \Z$, this has the following consequence.

\begin{cor} \label{corsemisimple}
For all $h \in \C$, the representation $R_h$ of $B$ is semisimple.
\end{cor}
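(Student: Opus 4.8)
The plan is to dispose of the two ranges of $h$ separately, exploiting the dichotomy already built into the preceding two results. First I would observe that the decomposition $R_h = R_h^1 \oplus \dots \oplus R_h^r$ along the $W$-orbits $\mathcal{A} = \mathcal{A}_1 \sqcup \dots \sqcup \mathcal{A}_r$ is valid for \emph{every} $h \in \C$, since the stability of each $\C \mathcal{A}_k$ comes directly from the monodromy formula for $R_h$ and does not depend on whether $h$ is an integer. Thus it suffices to treat each summand $R_h^k$ and conclude that a direct sum of semisimple representations is semisimple.

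For $h \notin \Z$, the work is already done: by the proposition immediately above, each $R_h^k$ is irreducible, so $R_h$ is a direct sum of irreducible representations of $B$ and is therefore semisimple. For $h \in \Z$, I would invoke the remark made earlier in the section, namely that $R_n(P) = \{\Id\}$ for $n \in \Z$, so that $R_h$ factors through the quotient $B/P \cong W$. Since $W$ is a finite group and we are working over $\C$, Maschke's theorem shows that every finite-dimensional representation of $W$ is semisimple, and hence so is $R_h$ viewed as a representation of $B$. Combining the two cases yields semisimplicity for all $h \in \C$.

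There is essentially no serious obstacle here; the corollary is a bookkeeping consequence of the two neighbouring statements. The only point requiring a little care is to make explicit that the orbit decomposition of $\C \mathcal{A}$ into the $\C \mathcal{A}_k$ is $B$-stable uniformly in $h$, so that the non-integer argument (irreducibility of the pieces) and the integer argument (Maschke for $W$) can each be applied summand-by-summand. Once that is noted, the proof is two lines.
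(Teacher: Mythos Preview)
Your argument is correct and matches the paper's own reasoning exactly: the corollary is stated immediately after the irreducibility proposition with the one-line justification ``Since $R_h$ factors through $W$ when $h \in \Z$, this has the following consequence,'' which is precisely the dichotomy you spell out (irreducible summands for $h \notin \Z$, Maschke for $h \in \Z$). Your additional remark that the orbit decomposition $R_h = \bigoplus_k R_h^k$ is valid for all $h$ is already established in the paper just before the proposition, so nothing further is needed.
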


We choose a collection of roots $e_H, H \in \mathcal{A}$.
Notice that, for $w \in W$, $w(H) =H$ implies $w.e_H = e^{\ii \theta} e_H$
for some $\theta \in \R$.
\begin{lemma} \label{lemgammaom} If $\gamma : \underline{z} \rightsquigarrow w.\underline{z}$
is a path in $X$ with $w \in W$ such that $w.e_H = e^{\ii \theta} e_H$, then $\int_{\gamma} \om \in \ii \theta + 2 \ii \pi \Z$.
\end{lemma}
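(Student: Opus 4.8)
The plan is to read the $\om$ in the statement as the scalar $1$-form $\om_H$ attached to the single hyperplane $H$ occurring in the hypothesis; the condition $w.e_H = e^{\ii \theta} e_H$ already forces $w(H) = H$, so this is the only hyperplane that matters. The whole point is that $\om_H = \dd \alpha_H / \alpha_H$ is just $\dd \log \alpha_H$, so $\int_{\gamma} \om_H$ records the total variation of a continuous branch of $\log(\alpha_H \circ \gamma)$ along the path. Since $\gamma$ lies entirely in $X = V \setminus \bigcup \mathcal{A}$, the function $\alpha_H \circ \gamma$ never vanishes, such a branch exists, and the fundamental theorem of calculus gives
$$
\exp\left( \int_{\gamma} \om_H \right) = \frac{\alpha_H(\gamma(1))}{\alpha_H(\gamma(0))} = \frac{\alpha_H(w.\underline{z})}{\alpha_H(\underline{z})}.
$$
Thus the assertion $\int_{\gamma} \om_H \in \ii \theta + 2 \ii \pi \Z$ is equivalent to the single identity $\alpha_H(w.\underline{z}) = e^{\ii \theta} \alpha_H(\underline{z})$, and the rest of the argument is the computation of this ratio.

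To carry this out I would first identify $\alpha_H$ with the root $e_H$ through the hermitian product. The reflection $s$ fixing $H$ is semisimple (it has finite order) and unitary for the $W$-invariant product, so its eigenspaces $H = \Ker(s-1)$ and $\C e_H$ are orthogonal; hence $H^{\perp} = \C e_H$, and any linear form with kernel $H$ is a scalar multiple of $y \mapsto (e_H|y)$. Writing $\alpha_H = c\,(e_H|\cdot)$ with $c \in \C^{\times}$, the $W$-invariance of the product yields
$$
\alpha_H(w.\underline{z}) = c\,(e_H|w.\underline{z}) = c\,(w^{-1}.e_H|\underline{z}).
$$
Applying $w^{-1}$ to $w.e_H = e^{\ii \theta} e_H$ gives $w^{-1}.e_H = e^{-\ii \theta} e_H$, and since the product is antilinear on the left (our convention being linearity on the right) we get $(w^{-1}.e_H|\underline{z}) = \overline{e^{-\ii \theta}}\,(e_H|\underline{z}) = e^{\ii \theta}\,(e_H|\underline{z})$. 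Hence $\alpha_H(w.\underline{z}) = e^{\ii \theta} \alpha_H(\underline{z})$, which combined with the displayed formula above gives $\exp(\int_{\gamma} \om_H) = e^{\ii \theta}$, that is, $\int_{\gamma} \om_H \in \ii \theta + 2 \ii \pi \Z$.

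The computation is short, and I expect the only delicate points to be bookkeeping rather than conceptual. The first is the branch-of-logarithm step: one must use that $\gamma$ avoids $H$ so that $\alpha_H \circ \gamma$ is nonvanishing, which legitimizes the identity $\exp \int_{\gamma} \om_H = \alpha_H(w.\underline{z})/\alpha_H(\underline{z})$; the integral itself depends on the homotopy class of $\gamma$, but only through the ambiguity $2 \ii \pi \Z$, which is precisely the slack allowed in the conclusion. The second is the sign of $\theta$: the factor comes out $e^{+\ii \theta}$ rather than $e^{-\ii \theta}$ exactly because the form is antilinear in the \emph{left} argument, so conjugation turns $e^{-\ii \theta}$ back into $e^{\ii \theta}$; with the opposite convention the conclusion would instead read $-\ii \theta + 2 \ii \pi \Z$. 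Everything else---the orthogonality of the eigenspaces of the unitary reflection and the proportionality of $\alpha_H$ to $(e_H|\cdot)$---is routine.
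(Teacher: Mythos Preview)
Your proof is correct and follows essentially the same approach as the paper: both identify $\alpha_H$ with $(e_H|\cdot)$, use $W$-invariance of the hermitian form together with $w.e_H = e^{\ii\theta}e_H$ to get $\alpha_H(w.\underline{z}) = e^{\ii\theta}\alpha_H(\underline{z})$, and then read off $\int_{\gamma}\om_H$ modulo $2\ii\pi\Z$ from the endpoints of $\alpha_H\circ\gamma$ in $\C^{\times}$. Your version is in fact a slight streamlining, since you invoke $\exp(\int_{\gamma}\om_H) = \alpha_H(w.\underline{z})/\alpha_H(\underline{z})$ directly, whereas the paper reduces to $\int_{\gamma_0}\dd z/z$ in $\C^{\times}$ and then computes this integral for explicit comparison paths case by case.
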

\begin{proof} We can assume $-\pi < \theta \leq \pi$. Since $\int_{\gamma} \om_H$
is independent of the choice of $\alpha_H$, we can choose $\alpha_H : x \mapsto (e_H|x)$
with $(e_H|e_H) = 1$. We have $\alpha_H(w.x) = e^{\ii \theta} \alpha(x)$. We write
$\gamma(t) = \gamma_H(t) + \gamma_0(t) e_H$ with $\gamma_0 : [0,1] \to \C$
and $\gamma_H : [0,1] \to H$. Then $\alpha_H(\gamma(t)) = \gamma_0(t)$
and $\int_{\gamma} \om_H = \int_{\gamma_0} \frac{\dd z}{z}$. Letting $x = \alpha_H(\underline{z}) \in \C^{\times}$,
we have $\gamma_0 : x \rightsquigarrow e^{\ii \theta} x$. If $\gamma_1 : x
\rightsquigarrow e^{\ii \theta}x$ is an arbitrary path in $\C^{\times}$, then $\gamma_0*\gamma_1^{-1}$
is a loop in $\C^{\times}$, hence $\int_{\gamma_0} \frac{\dd z}{z} - \int_{\gamma_1}
\frac{\dd z}{z}$ is a multiple of $2 \ii \pi$. If $e^{\ii \theta} = 1$
this concludes the proof. If $e^{\ii \theta} = -1$ we consider
$\gamma_1(t) = x e^{\ii \pi t}$, for which $\int_{\gamma_1} \frac{\dd z}{z} = \ii \pi$.
If $e^{\ii \theta} = \zeta \not\in \{1,-1 \}$ we consider $\gamma_1(t) = (1-t)x + te^{\ii \theta} x$
and $\int_{\gamma_1} \frac{\dd z}{z} = \left. \log(1 + (e^{\ii \theta} -1)t) \right|_0^1$
where $\log$ denotes the natural determination of the logarithm over $\C \setminus \R^-$.
It follows that $\int_{\gamma_1} \frac{\dd z}{z} = \log e^{\ii \theta} = \ii \theta$,
and the conclusion follows.
\end{proof}

We recall from section 4 the definition of $\kappa(W)$.
$$
\kappa = \kappa(W) = \min \{ n \in \Z_{>0} \ | \forall w \in W \ \forall H \in \mathcal{A} \ \ 
w.e_H = \zeta e_H \Rightarrow \zeta^n = 1 \}
$$

\begin{theo} \label{theoperiode}
For all $h \in \C$, $R_{h + \kappa}$ is isomorphic to $R_h$. Moreover,
$\kappa$ is the smallest positive real number such that $R_{\kappa} \simeq
R_0$.
\end{theo}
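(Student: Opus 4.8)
The plan is to prove the two assertions separately: an explicit diagonal intertwiner gives the periodicity $R_{h+\kappa}\simeq R_h$, while the restriction to $P$ together with a character computation on $W$ gives the minimality.

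For the periodicity I would seek the intertwiner in diagonal form $D=\mathrm{diag}(\lambda_H)_{H\in\mathcal A}$ with $\lambda_H\in\C^\times$. Choosing a representative path $\gamma$ for $\sigma\in B$ with $\pi(\sigma)=w$, the monodromy formula gives $R_h(\sigma)v_H=\exp(h\int_\gamma\om_H)\,v_{w(H)}$, so the intertwining equation $D\,R_{h+\kappa}(\sigma)\,D^{-1}=R_h(\sigma)$ amounts, hyperplane by hyperplane, to
\[
\frac{\lambda_{w(H)}}{\lambda_H}=\exp\Bigl(-\kappa\int_\gamma\om_H\Bigr)=:\mu(w,H).
\]
First I would check that $\mu(w,H)$ depends only on $w$ and $H$: two lifts of $w$ differ by an element of $P$, along which $\int\om_H\in 2\ii\pi\Z$, and since $\kappa\in\Z$ this ambiguity disappears after applying $\exp(-\kappa\,\cdot\,)$. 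Additivity of the integral under concatenation, together with the $W$-equivariance of the periods $\int_\gamma\om_H$ recorded above, then shows that $\mu$ is a cocycle, $\mu(w_1w_2,H)=\mu(w_1,w_2(H))\,\mu(w_2,H)$. I would then fix one hyperplane $H_0$ in each $W$-orbit and set $\lambda_{H_0}=1$, $\lambda_{w(H_0)}=\mu(w,H_0)$.

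The step I expect to be the main obstacle is the well-definedness of this prescription, that is, $\mu(u,H_0)=1$ for every $u$ in the setwise stabiliser of $H_0$; by the cocycle identity this is exactly the compatibility needed, and it also forces the relation $\lambda_{w(H)}/\lambda_H=\mu(w,H)$ for all $H$, not only for the chosen representatives. This is precisely where Lemma~\ref{lemgammaom} and the definition of $\kappa$ are used: if $u(H_0)=H_0$ then $u.e_{H_0}=\zeta e_{H_0}$ with $\zeta=e^{\ii\theta}$, the lemma gives $\int_\gamma\om_{H_0}\in\ii\theta+2\ii\pi\Z$, whence $\mu(u,H_0)=\exp(-\kappa\ii\theta)=\zeta^{-\kappa}=1$ because $\kappa$ annihilates every such eigenvalue. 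Thus $D$ is defined and intertwines $R_{h+\kappa}$ with $R_h$; taking $h=0$ gives $R_\kappa\simeq R_0$.

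For the minimality, the restriction to $P$ disposes of non-integers: if $R_t\simeq R_0$ then $R_t|_P\simeq R_0|_P$, which is trivial, so the class $Q_H$ of a small loop around $H$ satisfies $R_t(Q_H)=\exp(2\ii\pi t\,p_H)=\Id$ and hence $t\in\Z$. For an integer $m$ both $R_m$ and $R_0$ factor through $W$, so I would compare their characters. As $R_m(w)$ is monomial with $R_m(w)v_H=\exp(m\int_\gamma\om_H)v_{w(H)}$, only hyperplanes fixed by $w$ contribute to the trace, and Lemma~\ref{lemgammaom} evaluates each contribution as $\zeta_{w,H}^{\,m}$, where $\zeta_{w,H}$ is the unit-modulus eigenvalue of $w$ on $e_H$; thus $\tr R_m(w)=\sum_{w(H)=H}\zeta_{w,H}^{\,m}$ whereas $\tr R_0(w)=\#\{H:w(H)=H\}$. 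A sum of unit-modulus numbers equals their count only if each equals $1$, so $R_m\simeq R_0$ forces $\zeta_{w,H}^{\,m}=1$ for all $w$ and all $H$ fixed by $w$, which by the definition of $\kappa$ holds exactly when $\kappa\mid m$. Combined with $R_\kappa\simeq R_0$, this shows that the set of positive reals $t$ with $R_t\simeq R_0$ is $\kappa\,\Z_{>0}$, so its smallest element is $\kappa$.
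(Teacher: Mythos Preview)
Your proof is correct, and the minimality half is essentially the paper's argument (the same character formula $\chi_m(w)=\sum_{w(H)=H}\zeta_{w,H}^{\,m}$ and the same ``a sum of unit-modulus numbers equals their count only if each term is $1$'' trick), with the pleasant addition that you dispose of non-integer $t$ cleanly via the restriction to $P$, whereas the paper treats all $0<h<\kappa$ at once and leaves the non-integer case slightly implicit.

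The periodicity half, however, is genuinely different. The paper does not build an intertwiner: it invokes the semisimplicity of $R_h$ for \emph{every} $h\in\C$ (Corollary~\ref{corsemisimple}) and then just checks $\chi_{h+\kappa}=\chi_h$, which is immediate from Lemma~\ref{lemgammaom} since each term $\exp(h\int_\gamma\omega_H)$ gets multiplied by $\zeta^{\kappa}=1$. Your route instead constructs an explicit diagonal conjugator $D$ by solving the cocycle equation $\lambda_{w(H)}/\lambda_H=\exp(-\kappa\int_\gamma\omega_H)$, with Lemma~\ref{lemgammaom} entering precisely at the well-definedness step on stabilisers. The paper's approach is shorter but leans on the separate semisimplicity result; yours is self-contained, constructive, and would still work in a setting where semisimplicity is not available.
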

\begin{proof}
Recall from corollary \ref{corsemisimple} that,
for all $h \in \C$, $R_h$ is semisimple.
Letting $\chi_h$ denote the character of $R_h$ on $B$, it is thus sufficient
to prove $\chi_h = \chi_{h+\kappa}$ for all $h \in \C$ in order
to get $R_{h + \kappa} \simeq R_h$.
Let $g \in B$ with $w = \pi(g)$, and $\gamma : \underline{z} \rightsquigarrow w.\underline{z}$
a representing path. By the explicit formulas above, we
have
$$
\chi_h(g) = \sum_{w(H) = H} \exp(h \int_{\gamma} \om_H)
$$
and $R_{h+ \kappa} \simeq R_h$ follows by lemma \ref{lemgammaom}. We now show that $\kappa$ is minimal
with this property. Assuming otherwise, we let $0 < h < \kappa$
such that $\chi_h = \chi_0$. By definition of $\kappa$
there exists $w \in W$, $H \in \mathcal{A}$ such that $w.e_H = e^{\ii \theta} e_H$
with $e^{\ii \theta h} \neq 1$. Letting $g \in B$ with $\pi(g) = w$
and $\gamma : \underline{z} \rightsquigarrow w.\underline{z}$ a representing
path, we have $\int_{\gamma} \om_H \in \ii \theta + 2 \ii \pi \Z$,
hence $\exp(h \int_{\gamma} \om_H) \neq 1$. It follows that $|\chi_h(g)| < \chi_0(g)$
hence a contradiction.
\end{proof}

\begin{prop} For any $H \in \mathcal{A}$ and $h \in \C$,
if $\sigma$ is a braided reflection around $H$, then
$R_h(\sigma)$ is conjugated to $R_0(\sigma) \exp(h (2\ii \pi/d_H) p_H)$.
\end{prop}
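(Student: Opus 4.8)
The plan is to exploit that, for a braided reflection $\sigma$ around $H$, the operator $R_h(\sigma)$ is a \emph{monomial} matrix. By the explicit monodromy formula it equals $R_0(\sigma)$, the permutation matrix of $s = \pi(\sigma)$ acting on $\mathcal{A}$, multiplied by the diagonal operator $\exp(h \sum_{H'} c_{H'} p_{H'})$ with $c_{H'} = \int_\gamma \om_{H'}$ (the $p_{H'}$ commute and are orthogonal idempotents, so this product is genuinely diagonal). The target $R_0(\sigma)\exp(h(2\ii\pi/d_H)p_H)$ is monomial with the \emph{same} underlying permutation $s$. Now two monomial matrices sharing an underlying permutation are conjugate, indeed conjugate by a diagonal matrix, exactly when for every cycle of that permutation the products of the diagonal weights around the cycle agree: writing the conjugator as $\mathrm{diag}(e_{H'})$, the conjugacy equations $(D_2)_{H'H'}/(D_1)_{H'H'} = e_{s(H')}/e_{H'}$ are solvable along a cycle iff the two cycle products coincide, the telescoping product of the right-hand side being $1$. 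So the whole statement reduces to matching these cycle products.

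First I would record the relevant values of $c_{H'}$. The excerpt already gives $c_H = \int_\gamma \om_H = 2\ii\pi/d_H$. For $H' \neq H$ the middle arc contributes nothing: $\alpha_{H'}\circ\gamma_1$ describes a small circle around $\alpha_{H'}(\underline{z_0}^+)\neq 0$, so $\int_{\gamma_1}\om_{H'}=0$ once $\eps$ is small. Combining this with the identity $\int_{s.\gamma_0}\om_{H'} = \int_{\gamma_0}\om_{s(H')}$ and the decomposition $\gamma = (s.\gamma_0)^{-1}*\gamma_1*\gamma_0$ yields the clean formula $c_{H'} = \int_{\gamma_0}\om_{H'} - \int_{\gamma_0}\om_{s(H')}$ for every $H' \neq H$.

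The decisive observation is that $s$ fixes $H$ as a hyperplane, since $H = \Ker(s-1)$; hence $H$ is a fixed point of the permutation of $\mathcal{A}$ induced by $s$, and its cycle is the singleton $\{H\}$, on which the cycle product of $R_h(\sigma)$ is $\exp(h c_H) = \exp(h\cdot 2\ii\pi/d_H)$, exactly the weight placed by $\exp(h(2\ii\pi/d_H)p_H)$. On any other cycle $C$ (which necessarily avoids $H$), every $H'\in C$ satisfies $H'\neq H$, so summing the formula above and reindexing by $H'\mapsto s(H')$, which permutes $C$, makes the sum telescope to $\sum_{H'\in C} c_{H'}=0$. The cycle product there is therefore $1$, matching the identity weight of the target (in particular every fixed hyperplane other than $H$ gives a singleton cycle with $c_{H'}=0$). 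Since all cycle products coincide, for every $h$, the two monomial matrices are conjugate.

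The only genuinely delicate ingredients are the monomial-conjugacy criterion and the telescoping, but both are elementary; the geometric content, namely that the singleton cycle $\{H\}$ carries the entire weight $2\ii\pi/d_H$ while every other cycle telescopes to $0$, is precisely what singles out $R_0(\sigma)\exp(h(2\ii\pi/d_H)p_H)$ as the correct normal form. I would finally remark that the conjugator produced here may depend on $h$, which is all that the statement asks for.
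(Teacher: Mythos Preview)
Your overall strategy is sound and differs from the paper's: instead of moving the basepoint to $\underline{z_0}$ (so that $\gamma$ reduces to $\gamma_1$) and then arguing that each $\int_{\gamma_1}\om_{H'}$ vanishes via an $\eps\to 0$ limit, you keep the basepoint fixed and invoke the cycle-product criterion for diagonal conjugacy of two monomial matrices sharing a permutation. The telescoping of the $\gamma_0$-contributions over each $s$-cycle is exactly the right mechanism, and your framework makes the diagonal conjugator explicit.

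There is, however, a genuine gap at your claim that $\int_{\gamma_1}\om_{H'}=0$ for $H'\neq H$. The path $\gamma_1$ sweeps only a $1/d_H$ fraction of a full turn, so $\alpha_{H'}\circ\gamma_1$ is an \emph{open arc} of a small circle about $\alpha_{H'}(\underline{z_0}^+)$, not a closed loop; the integral equals $\log\alpha_{H'}(s.\underline{z_0})-\log\alpha_{H'}(\underline{z_0})$, which is $O(\eps)$ but generically nonzero. Your displayed formula $c_{H'}=\int_{\gamma_0}\om_{H'}-\int_{\gamma_0}\om_{s(H')}$ therefore misses this residual term, and the individual $c_{H'}$ along a cycle do not literally cancel. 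The repair fits neatly into your framework: each $c_{H'}=\int_\gamma\om_{H'}$ depends only on the homotopy class of $\gamma$ and is hence independent of the decomposition into $\gamma_0,\gamma_1$; after your telescoping the cycle sum $\sum_{H'\in C}c_{H'}$ equals $\sum_{H'\in C}\int_{\gamma_1}\om_{H'}$, and the right-hand side tends to $0$ as $\eps\to 0$ while the left-hand side is constant, so both vanish. (The paper's own proof leans on the same $\eps\to 0$ limit at the corresponding step.)
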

\begin{proof}
Let $\sigma$ be a braided reflection with corresponding paths $\gamma, \gamma_0,\gamma_1$
as above. Since $\gamma_0$ and $s.\gamma_0$ represent the same path in $X/W$,
$R_h(\sigma)$ is conjugated to the monodromy along the loop $\gamma_1$ in $X/W$,
so that we can assume $\underline{z} = \underline{z_0}$, $\gamma = \gamma_1$.
In view of the formulas above, we thus only need to show that $\int_{\gamma_1} \om_{H'} = 0$
for $H' \neq H$. This can be done by direct computation,
as $\alpha_{H'}(\gamma_1(t)) = \eps \exp(2 \ii \pi t/d_H) \alpha_{H'}(\underline{z_0}^-) +
\alpha_{H'}(\underline{z_0}^+)$ with $\alpha_{H'}(\underline{z_0}^-) \neq 0$,
and $\int_{\gamma_1} \om_{H'}$ is constant when $\eps \to 0$. Since $\int_{\gamma_1} \om_{H'} \to 0$
when $\eps \to 0$ we get $\int_{\gamma} \om_{H'} = 0$ and the conclusion.
\end{proof}

\subsection{New representations of $W$}

When $n \in \Z$, the representation $R_n$ of $B$ factorizes through $W$.
In case $W$ is irreducible, the action of the center is easy to describe.

\begin{lemma} \label{lemR1centre} If $w \in W$ acts by $\la \in \C^{\times}$
on $V$, then $R_n(w) = \la^n \Id$ if $n \in \Z$. More generally,
if there exists $v \in X$ such that $w.v = \la v$ for some $\la \in \C^{\times}$,
then $R_n(w)$ is conjugated to $\la^n R_0(w)$
\end{lemma}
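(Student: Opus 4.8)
The plan is to establish the general (second) assertion first and then recover the first as the special case in which $w$ acts trivially on $\C\mathcal{A}$. Since $n \in \Z$, the representation $R_n$ factors through $W$, so $R_n(w)$ is a well-defined operator; changing the basepoint only conjugates the whole monodromy representation, so in order to pin down the conjugacy class of $R_n(w)$ I am free to compute it with respect to any convenient basepoint and any representing path. The idea is to choose these so that the periods $\int_\gamma \om_H$ come out the same for every $H$, which is exactly what forces the answer to be $\la^n$ times the permutation operator $R_0(w)$.

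First I would take the regular eigenvector $v \in X$ itself as basepoint and consider the path $\gamma(t) = e^{t\ell} v$, where $\ell$ is a fixed logarithm of $\la$. Scaling a regular vector by a nonzero scalar keeps it regular, since $\alpha_H(e^{t\ell}v) = e^{t\ell}\alpha_H(v) \neq 0$ for all $H$; hence $\gamma$ lies in $X$. It runs from $v$ to $e^{\ell} v = \la v = w.v$, so it represents some $\sigma \in B$ with $\pi(\sigma) = w$.

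The heart of the argument is the uniform computation of periods. Along $\gamma$ one has $\alpha_H(\gamma(t)) = e^{t\ell}\alpha_H(v)$, so that
$$\int_{\gamma} \om_H = \int_0^1 \frac{\dd}{\dd t}\log\alpha_H(\gamma(t))\,\dd t = \ell$$
for \emph{every} $H \in \mathcal{A}$, independently of $H$. Inserting this into the monodromy formula
$$R_n(\sigma) = \pi(\sigma)\prod_{H \in \mathcal{A}} \exp\left(n\,p_H \int_{\gamma} \om_H\right)$$
and using that the $p_H$ are orthogonal idempotents with $\sum_H p_H = \Id$, the product collapses to $\prod_H \exp(n\ell\,p_H) = \exp(n\ell\,\Id) = \la^n \Id$, where $e^{n\ell} = \la^n$ is unambiguous because $n \in \Z$. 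Thus $R_n(\sigma) = \la^n\,\pi(\sigma) = \la^n R_0(w)$ for this basepoint, and since any other choice of basepoint yields a conjugate operator, $R_n(w)$ is conjugate to $\la^n R_0(w)$ in general.

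For the first assertion, if $w$ acts on all of $V$ by $\la$ then it stabilizes every hyperplane, so $w(H) = H$ and the permutation operator $R_0(w)$ is the identity; moreover any $v \in X$ is then a $\la$-eigenvector, and the computation above gives $R_n(w) = \la^n \Id$ outright. Being a scalar, this value is basepoint-independent, so here the conjugacy of the general case sharpens to an honest equality. The only point demanding care is the legitimacy of the basepoint change, i.e. that the conjugacy class of $R_n(w)$ is genuinely what we read off from the special path; this is the standard fact that the monodromy representations of a flat bundle attached to different basepoints are conjugate via parallel transport, applied here after descending to $X/W$.
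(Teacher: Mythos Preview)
Your proof is correct and follows essentially the same approach as the paper's: choose the eigenvector $v$ as basepoint, take the path $\gamma(t)=e^{t\ell}v$ (the paper writes $e^{\ii\theta t}\underline{z}$ with $0<\theta\le 2\pi$), observe that $\int_\gamma\om_H=\ell$ for every $H$, and plug into the monodromy formula. The only cosmetic difference is that you treat the general statement first and then specialize, whereas the paper does the scalar-action case first and then remarks that the general case follows after conjugating the basepoint to $v$.
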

\begin{proof}
We first assume that $w$ acts on $V$ by $\la$. We can write $\la = \exp(\ii \theta)$ with $0 < \theta \leq 2 \pi$. We consider
the loop $\gamma(t) = e^{\ii \theta t} \underline{z}$ in $X/W$, whose image
in $W$ is $w$. By direct
calculation we have $\int_{\gamma} \om_H = \ii  \theta$ for all $H \in \mathcal{A}$
and the conclusion follows from the general formula for $R_1$. 
Now assume $w.v = \la v$ for some $\la = \exp(\ii \theta)$ with $0 < \theta \leq 2 \ii \pi$.
Up to conjugation, we can assume $v = \underline{z}$, the loop
$\gamma(t) = e^{\ii \theta t} \underline{z}$ in $X/W$ has image $w$ in $W$ and we conclude as before.
\end{proof}

More involved tools prove the following.

\begin{prop} \label{proprestrparab} If $W_0$ is a parabolic subgroup of $W$ with hyperplane
arrangement $\mathcal{A}$ and $n \in \Z$, then
the restriction of $R_n$ to $W_0$ is isomorphic to the direct sum
of the representation $R_n$ of $W_0$ and the permutation representation
of $W_0$ on $\C(\mathcal{A} \setminus \mathcal{A}_0)$.
\end{prop}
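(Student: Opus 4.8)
The plan is to read the restriction off the explicit monodromy formula
$R_n(w).v_H = \exp\!\big(n\int_{\gamma}\om_H\big)\,v_{w(H)}$, choosing the basepoint and the representing paths so as to decouple the two arrangements. First I would record, using Steinberg's theorem, that the reflections of $W$ contained in $W_0$ (the fixer of a subspace $F\subset V$) are exactly those whose mirror contains $F$; hence $\mathcal{A}_0=\{H\in\mathcal{A}\mid F\subset H\}$, and $W_0$ stabilises both $\mathcal{A}_0$ and $\mathcal{A}\setminus\mathcal{A}_0$ setwise. Therefore $\C\mathcal{A}=\C\mathcal{A}_0\oplus\C(\mathcal{A}\setminus\mathcal{A}_0)$ is a decomposition into $R_n|_{W_0}$-stable subspaces, and it remains to identify each summand. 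Since $R_n$ factors through $W$, the scalar $\exp(n\int_{\gamma}\om_H)$ is independent of the path from $\underline{z}$ to $w.\underline{z}$, so I am free to use a convenient basepoint $\underline{z}=\underline{z}_1+\eps\underline{z}_0$ with $\underline{z}_1\in F$ generic, $\underline{z}_0\in V_0:=F^{\perp}$ regular (orthogonality for the $W$-invariant hermitian form), and paths $\gamma_\eps(t)=\underline{z}_1+\eps\gamma_0(t)$, where $\gamma_0$ joins $\underline{z}_0$ to $w.\underline{z}_0$ inside $V_0$ while avoiding the mirrors of $W_0$.

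For the summand $\C\mathcal{A}_0$ this choice works at once. If $H\in\mathcal{A}_0$ then $\alpha_H(\underline{z}_1)=0$, so $\alpha_H(\gamma_\eps(t))=\eps\,\alpha_H(\gamma_0(t))$ and $\int_{\gamma_\eps}\om_H=\int_{\gamma_0}\frac{\dd\alpha_H}{\alpha_H}$ computed entirely inside $V_0$; moreover this integral is scale invariant, hence independent of $\eps$. It is precisely the monodromy integral defining the representation $R_n$ of the reflection group $W_0$ acting on $V_0$ (basepoint $\underline{z}_0$, arrangement $\mathcal{A}_0$), so under the obvious identification $v_H\leftrightarrow v_{H\cap V_0}$ the restriction of $R_n$ to $\C\mathcal{A}_0$ is identified with the representation $R_n$ of $W_0$.

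The main obstacle is the summand $\C(\mathcal{A}\setminus\mathcal{A}_0)$, where the expected answer is the permutation module but where the matrix coefficients are not literally $1$ for a fixed basepoint. Here $H\notin\mathcal{A}_0$ means $F\not\subset H$, so $\alpha_H(\underline{z}_1)\neq 0$ for generic $\underline{z}_1$. Then $\alpha_H(\gamma_\eps(t))=\alpha_H(\underline{z}_1)+\eps\,\alpha_H(\gamma_0(t))$, which for small $\eps$ stays in a small disc about $\alpha_H(\underline{z}_1)\neq 0$ avoiding the origin; consequently $\int_{\gamma_\eps}\om_H=\log\!\big(\alpha_H(\underline{z})^{-1}\alpha_H(w.\underline{z})\big)\to 0$ as $\eps\to 0^{+}$, and every matrix coefficient of $R_n|_{W_0}$ on $\C(\mathcal{A}\setminus\mathcal{A}_0)$ tends to the corresponding coefficient of the permutation module.

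To upgrade this limiting statement to an actual isomorphism I would invoke rigidity of complex representations of the finite group $W_0$. For each $w\in W_0$ the operator $R_n(w)$ has finite order, so its character value $\chi_\eps(w)$ on the stable subspace $\C(\mathcal{A}\setminus\mathcal{A}_0)$ is a sum of boundedly many roots of unity of bounded order, hence ranges over a discrete subset of $\C$. Being also a continuous function of $\eps$ on $(0,\eps_0]$ whose limit as $\eps\to 0^{+}$ equals the permutation-character value $\#\{H\in\mathcal{A}\setminus\mathcal{A}_0\mid w(H)=H\}$, it must be constant and equal to that value. Thus $R_n|_{W_0}$ and the permutation representation of $W_0$ on $\C(\mathcal{A}\setminus\mathcal{A}_0)$ share the same character, and semisimplicity yields the desired isomorphism, completing the proof.
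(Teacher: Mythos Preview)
Your argument is correct and proceeds quite differently from the paper's.

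The paper does not compute anything directly on $\C\mathcal{A}$. Instead it invokes an external result (\cite{KRAMCRG}, theorem~2.9) embedding the braid group $B_0$ of $W_0$ into $B$ so that, as formal power series in $h$, the restriction of $R_h$ to $B_0$ coincides with $R_h^{W_0}\oplus\C(\mathcal{A}\setminus\mathcal{A}_0)$. Since both traces are entire functions of $h$ agreeing as formal series, they agree for every $h\in\C$, and semisimplicity at $h=n\in\Z$ upgrades character equality to an isomorphism. Your approach, by contrast, stays entirely within the explicit monodromy formula: you pick a basepoint $\underline{z}_1+\eps\underline{z}_0$ degenerating towards the fixed subspace $F$, observe that on $\C\mathcal{A}_0$ the monodromy literally matches $R_n^{W_0}$ for every $\eps$, and on $\C(\mathcal{A}\setminus\mathcal{A}_0)$ the diagonal coefficients tend to $1$ as $\eps\to 0$; the discreteness of character values of a finite group then pins the character to the permutation character. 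What you gain is self-containedness: no appeal to an outside embedding theorem or to formal power series is needed. What the paper's route gains is that the character identity $\tr R_h|_{B_0}=\tr S_h$ holds for \emph{all} $h\in\C$, not only integers. One small remark: your continuity-plus-discreteness step, while valid, is stronger than necessary---the change of basepoint from $\underline{z}_\eps$ to $\underline{z}_{\eps'}$ conjugates $R_n$ by a diagonal matrix (the monodromy along the segment $\underline{z}_\eps\rightsquigarrow\underline{z}_{\eps'}$), which preserves the splitting $\C\mathcal{A}_0\oplus\C(\mathcal{A}\setminus\mathcal{A}_0)$, so the summand characters are literally constant in $\eps$ rather than merely continuous.
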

\begin{proof}
We let $R_h^0$ denote the representation $R_h$ for $W_0$ acting on
$\C \mathcal{A}_0$, and $S_h$ the direct sum of $R_h^0$ and
the permutation representation of $W_0$ on $\mathcal{A}\setminus
\mathcal{A}_0$. We can embed the braid group $B_0$ of $W_0$
inside $B$ such that, as representations over
$\C[[h]]$, the restriction to $B_0$ of $R_h$ is isomorphic to
$S_h$ (see \cite{KRAMCRG}, theorem 2.9). In particular, for all $g \in B_0$,
the traces of $R_h(g)$ and $S_h(g)$ are equal, as formal series in $h$.
Since these traces are holomorphic functions in $h$, it follows that
they are equal for all $h \in \C$. This means that the semisimple
representations of $B_0$ associated to the restriction of $R_h$ and to $S_h$
are isomorphic. Since the restriction of $R_n$ and $S_n$
are semisimple for all $n \in \Z$
the conclusion follows.

\end{proof}

The determination of the action of the center
enables us to prove that, contrary to $R_0$, $R_1$ is faithful in general.

\begin{prop} { \ \ }
\begin{enumerate}
\item $R_0$ has kernel $Z(W)$.
\item $R_1$ is faithful on $W$.
\item $\Ker R_n = \{ w \in Z(W) \ | \ w^n = 1 \}$
\end{enumerate}
\end{prop}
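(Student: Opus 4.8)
The plan is to prove all three statements at once, noting that (1) and (2) are the cases $n=0$ and $n=1$ of (3), and that the inclusion ``$\subseteq$'' in (3) relies on (1). So I would first isolate statement (1). For ``$Z(W) \subseteq \Ker R_0$'': if $w \in Z(W)$ and $H = \Ker(s-1)$ for a reflection $s$, then $w(H) = \Ker(wsw^{-1}-1) = \Ker(s-1) = H$, so $R_0(w)$ fixes every $v_H$. For the reverse inclusion: if $w \in \Ker R_0$, i.e. $w(H)=H$ for all $H \in \mathcal{A}$, then for each distinguished reflection $s$ with hyperplane $H$ the conjugate $wsw^{-1}$ is again a reflection, with hyperplane $w(H)=H$ and the same nontrivial eigenvalue $\zeta_H$; by uniqueness of the distinguished reflection attached to $H$ this forces $wsw^{-1}=s$. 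Since the distinguished reflections generate $W$, we obtain $w \in Z(W)$, hence $\Ker R_0 = Z(W)$.

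Next I would record the structural observation that, for $n \in \Z$, the matrix $R_n(w)$ is \emph{monomial}. The explicit formula $R_h(\sigma) = \pi(\sigma)\prod_{H}\exp(h p_H \int_\gamma \om_H)$ exhibits $R_n(w)$ as the permutation matrix of $w$ acting on $\mathcal{A}$ composed with an invertible diagonal matrix, the coefficient in front of $v_H$ being the nonzero scalar $\exp(n\int_\gamma \om_H)$; this is well-defined because $R_n$ factors through $W$, so it is independent of the chosen lift $\sigma$ and path $\gamma$. Consequently $R_n(w) = \Id$ forces the underlying permutation $H \mapsto w(H)$ of $\mathcal{A}$ to be trivial, so $w \in \Ker R_0 = Z(W)$. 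This yields $\Ker R_n \subseteq Z(W)$ for every $n \in \Z$.

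It then remains to compute $R_n$ on the central elements. Here I would reduce to the irreducible case by Proposition \ref{propR1deco} together with $Z(W) = \prod_k Z(W_k)$: writing $w = (w_k)$ we have $R_n(w) = \Id$ if and only if $R_n^{(k)}(w_k) = \Id$ for all $k$. When $W$ is irreducible, a central element $w$ acts on $V$ by a scalar $\la$ by Schur's lemma, so Lemma \ref{lemR1centre} gives $R_n(w) = \la^n \Id$. Since $W \subset \GL(V)$ acts faithfully, $w^n = 1$ is equivalent to $\la^n = 1$, that is to $R_n(w) = \Id$; hence $\Ker R_n \cap Z(W) = \{ w \in Z(W) : w^n = 1 \}$. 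Combined with the inclusion of the previous paragraph this proves (3), and (2) is the special case $n = 1$.

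The one point requiring care is the monomial-matrix claim, but its only real content is that the diagonal factor is invertible (the entries are exponentials, hence nonzero) and that $R_n(w)$ does not depend on the lift or path — both immediate from the fact that $R_n$ factors through $W$ for $n \in \Z$. Everything else is bookkeeping: the inclusion $\Ker R_n \subseteq Z(W)$ is forced by the triviality of the permutation, and the equality on $Z(W)$ is the scalar computation of Lemma \ref{lemR1centre} read through the faithful action of $W$ on $V$.
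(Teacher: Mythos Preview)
Your proof is correct and follows essentially the same route as the paper's: establish (1) via uniqueness of the distinguished reflection attached to each hyperplane, then use the monomial structure $R_n(w) = R_0(w)D$ (with $D$ diagonal and invertible) to force $\Ker R_n \subseteq Z(W)$, and finish with the scalar computation of Lemma~\ref{lemR1centre}. The only difference is organizational: the paper reduces to the irreducible case at the very start, whereas you defer this reduction to the one place it is actually needed, namely the application of Schur's lemma on $Z(W)$.
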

\begin{proof}
Without loss of generality (because of proposition \ref{propR1deco}) we may assume that $W$ is irreducible.
Obviously $(3) \Rightarrow (2)$.
Although (1) is also a special case of (3), we prove it separately. If $|\mathcal{A}| = 1$ the statement is obvious, so
we assume $|\mathcal{A}| \geq 2$. Clearly $Z(W) \subset \Ker R_0$,
as $\Ker(wgw^{-1} - 1) = w.\Ker(g-1)$ for all $g,w \in W$. Let
$w \in W$ such that $R_0(w) = \Id$, that is $w(H) = H$ for all $H \in
\mathcal{A}$. Let $s \in W$ be a distinguished reflection with
reflection hyperplane $H$. Then $wsw^{-1}$ is a reflection with
$\Ker(wsw^{-1} -1) = H$ which has the same nontrivial eigenvalue as $s$,
hence $wsw^{-1}  = s$.
It follows that $w$ commutes to all distinguished reflections of $W$,
hence $w \in Z(W)$ since $W$ is generated by such elements.

We now prove (3). Let $w \in \Ker R_n$. Since $R_1(w) = R_0(w) D$
for some diagonal matrix $D$, the nonzero entries of $R_n(w)$ determine
the permutation matrix $R_0(w)$, hence $w \in Z(W)$. Since $W$ is irreducible,
$w$ acts on $V$ by some scalar $\la \in \C^{\times}$, hence $R_n(w) = \la^n = 1$
by lemma \ref{lemR1centre}, hence $w^n = 1$. The converse inclusion is obvious
by lemma \ref{lemR1centre}.
\end{proof}

\begin{cor} The exponent of $Z(W)$ divides $\kappa(W)$.
If $W$ is irreducible then $|Z(W)|$ divides $\kappa(W)$.
\end{cor}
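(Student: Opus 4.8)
The plan is to exploit the period isomorphism $R_\kappa \simeq R_0$ from Theorem \ref{theoperiode} together with the explicit action of $R_n$ on central elements. Write $\kappa = \kappa(W)$, which is a positive integer: the eigenvalues $\zeta$ occurring in the defining condition $w.e_H = \zeta e_H$ are roots of unity (the $w$ have finite order), hence finitely many, so any common multiple of their orders lies in the set being minimized. Since $\kappa \in \Z$, both $R_\kappa$ and $R_0$ factor through $W$, and Theorem \ref{theoperiode} then gives $R_\kappa \simeq R_0$ as representations of $W$. In particular $\Ker R_\kappa = \Ker R_0$, and the latter equals $Z(W)$ by part (1) of the preceding proposition (whose proof, showing that an element fixing every hyperplane commutes with every distinguished reflection, does not use irreducibility). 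Thus $R_\kappa(w) = \Id$ for every $w \in Z(W)$.

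First I would prove that the exponent of $Z(W)$ divides $\kappa$. Fix $w \in Z(W)$ and decompose $W = W_1 \times \dots \times W_r$ into irreducible components, with $V = V_1 \oplus \dots \oplus V_r$ and $Z(W) = Z(W_1) \times \dots \times Z(W_r)$, so that $w = (w_1, \dots, w_r)$ with $w_i \in Z(W_i)$. As $W_i$ acts irreducibly on $V_i$, Schur's lemma forces $w_i$ to act by a scalar $\la_i \in \C^{\times}$. By Lemma \ref{lemR1centre} applied inside the $i$-th component, $R_\kappa^{(i)}(w_i) = \la_i^\kappa \Id$, and Proposition \ref{propR1deco} gives $R_\kappa(w) = \bigoplus_i \la_i^\kappa \Id$. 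Comparing with $R_\kappa(w) = \Id$ yields $\la_i^\kappa = 1$ for every $i$; since $W_i$ acts faithfully on $V_i$ and $w_i$ acts by $\la_i$, the order of $w_i$ equals the order of $\la_i$, which divides $\kappa$. Hence $w^\kappa = 1$, and as $w$ was arbitrary the exponent of $Z(W)$ divides $\kappa$.

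For the irreducible case I would note that $W$ acting irreducibly forces, again by Schur's lemma, every element of $Z(W)$ to be a scalar; thus $Z(W)$ is a finite subgroup of $\C^{\times} \cdot \Id$, hence cyclic, so its exponent equals its order $|Z(W)|$. Combining this with the first part gives $|Z(W)| \mid \kappa$. Equivalently and more directly, a generator of $Z(W)$ acts by a primitive $|Z(W)|$-th root of unity $\la$, and $R_\kappa \simeq R_0$ forces $\la^\kappa = 1$ through Lemma \ref{lemR1centre}.

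There is no genuinely hard step here; the points requiring care, and which I expect to be the main obstacle to a clean write-up, are the two preparatory facts that make the argument run uniformly: that $\kappa$ is an integer, so that $R_\kappa$ descends to a representation of $W$, and that $\Ker R_0 = Z(W)$ holds without any irreducibility hypothesis, so that the product decomposition of Proposition \ref{propR1deco} can be used to reduce the exponent computation to the scalar action on each irreducible constituent.
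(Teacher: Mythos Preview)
Your argument is correct and uses the same ingredients as the paper: the periodicity $R_\kappa \simeq R_0$, the identification of the kernel on the center, and the scalar action from Lemma \ref{lemR1centre} after reducing to irreducible components. The paper packages this slightly more crisply by citing part (3) of the preceding proposition, $\Ker R_n = \{w \in Z(W)\mid w^n=1\}$, and observing that the sequence $n\mapsto\Ker R_n$ has minimal period equal to the exponent of $Z(W)$ while also being $\kappa$-periodic; your version instead inlines the relevant direction of (3) at $n=\kappa$, which is equally valid.
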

\begin{proof}
By the proposition, the period of the sequence $\Ker R_n$
is the exponent of $Z(W)$. Since $\Ker R_n$
is $\kappa(W)$-periodic the conlusion follows. If
$W$ is irreducible then $Z(W)$ is cyclic hence
its order equals its exponent.
\end{proof}

In the proof of theorem \ref{theoperiode}, we computed the character $\chi_n$
of $R_n$. We recall the result here :

\begin{prop} For any $w \in W$ and $n \in \Z$ we have
$$
\chi_n(w) = \sum_{w.e_H = \zeta e_H} \zeta^n
$$
\end{prop}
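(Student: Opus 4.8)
The plan is to read the trace directly off the explicit monodromy formula
$R_h(\sigma) = \pi(\sigma) \prod_{H \in \mathcal{A}} \exp(h p_H \int_{\gamma} \om_H)$
established above. Since the $p_H$ are the diagonal idempotents, the product
$\prod_{H} \exp(h p_H \int_{\gamma} \om_H)$ is the diagonal operator sending $v_H$ to
$\exp(h \int_{\gamma} \om_H) v_H$, while $\pi(\sigma) = R_0(w)$ is the permutation matrix
$v_H \mapsto v_{w(H)}$. Hence the diagonal coefficient of $R_h(\sigma)$ on $v_H$ is nonzero
precisely when $w(H) = H$, in which case it equals $\exp(h \int_{\gamma} \om_H)$. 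Taking
traces gives $\chi_h(g) = \sum_{w(H)=H} \exp(h \int_{\gamma} \om_H)$, which is exactly the
formula already obtained in the proof of Theorem \ref{theoperiode}. It remains only to
rewrite this for $n \in \Z$ in the stated form.

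Next I would identify the index set. As already noted before Lemma \ref{lemgammaom},
$w(H) = H$ implies $w.e_H = e^{\ii\theta} e_H$ for some $\theta \in \R$; conversely, if
$w.e_H = \zeta e_H$ then $w$ preserves the line $\C e_H$, and since $w$ is a hermitian
isometry it preserves $(\C e_H)^{\perp} = H$ as well, so $w(H) = H$. Thus summing over
$\{ H \ | \ w(H)=H \}$ coincides with summing over $\{ H \ | \ w.e_H = \zeta e_H \}$, the
eigenvalue $\zeta = e^{\ii\theta}$ being the one attached to each such $H$.

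Finally I would evaluate the integral for integer $n$ by means of Lemma \ref{lemgammaom}.
Writing $w.e_H = e^{\ii\theta} e_H = \zeta e_H$, the lemma gives
$\int_{\gamma} \om_H \in \ii\theta + 2\ii\pi\Z$, so that
$\exp(n \int_{\gamma} \om_H) = e^{\ii n\theta}\, \exp(2\ii\pi n k) = e^{\ii n\theta} = \zeta^n$
for every $n \in \Z$, the indeterminacy $2\ii\pi\Z$ disappearing precisely because $n$ is an
integer. Substituting into the trace formula yields
$\chi_n(w) = \sum_{w.e_H = \zeta e_H} \zeta^n$, as claimed.

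There is essentially no serious obstacle here: the substance has already been extracted in
the proof of Theorem \ref{theoperiode}, and the present statement merely repackages that
computation at $h = n \in \Z$. The only points demanding a little care are the equivalence
$w(H) = H \Leftrightarrow w.e_H = \zeta e_H$, which rests on $w$ being a hermitian isometry,
and the observation that the $2\ii\pi\Z$ ambiguity in $\int_{\gamma} \om_H$ washes out upon
raising to the integer power $n$.
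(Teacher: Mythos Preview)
Your proposal is correct and follows exactly the paper's approach: the paper does not give a separate proof but simply refers back to the computation in the proof of Theorem~\ref{theoperiode}, and you have spelled out precisely those steps (the trace formula via the explicit monodromy, the identification of the index set, and the use of Lemma~\ref{lemgammaom} at integer $n$).
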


If $\tilde{K} = \Q(\zeta_d)$ is a cyclotomic field containing all eigenvalues of
$R_1(W)$, then letting $c_n \in \mathrm{Gal}(\tilde{K}|\Q)$ for $n \wedge d = 1$ be defined by
$c_n(\zeta_d) = \zeta_d^n$ we get from this proposition that $\chi_n = c_n \circ \chi_1$
for all $n$ prime to $d$.

\medskip

As an illustration of this section,
we do the example of $W$ of type $G_4$ generated
by
$$
s = \begin{pmatrix} 1 & 0 \\ 0 & j \end{pmatrix} \ \ 
t = \frac{1}{3}\begin{pmatrix} 1+2j & j-1 \\ 2j-2 & j+2 \end{pmatrix}.
$$
It is a reflection group of order 24, with two generators
$s,t$ of order 3 satisfying $sts=tst$, and center of order 2. It admits 3 one-dimensional
(irreducible) representations $S_{\alpha} : s,t \mapsto \alpha$, 3 two-dimensional
representations $A_{\alpha}$ with $\tr A_{\alpha}(s) = -\alpha$
for $\alpha \in \{ 1 , j,j^2 \}$ with $j = \exp(2 \ii \pi/3)$
and a 3-dimensional one that we denote $U$.
The reflection representation is $A_{j^2}$,
and $\kappa(W) = 6$. From the character table of $W$ one gets
$$\begin{array}{lcllcllcl}
R_0 &= & S_1 + U & R_1 &= & A_1 + A_{j^2} & R_2 & = & S_{j^2} + U \\
R_3 &=& A_j + A_{j^2} & R_4 &=& S_j + U & R_5 & = & A_1 + A_{j^2}
\end{array}
$$

\subsection{The case of Coxeter groups}

If $W$ is a Coxeter group, we get a simpler form of this representation.
Recall that, in this case, $\mathcal{A}$ is the
complexification of some real arrangement $\mathcal{A}_0$ in $V_0$,
where $V_0$ is a real form of $V$ ; moreover, choosing
some connected component $\mathcal{C}$ of $V_0 \setminus \bigcup \mathcal{A}_0$,
called a Weyl chamber, determines $n$ hyperplanes $H_1,\dots,H_n$ called
the \emph{walls} of $\mathcal{C}$, and the corresponding $n$ reflections $s_1,\dots,s_n$
are called the simple reflections associated to $\mathcal{C}$.
If $\underline{z} \in \mathcal{C}$, there is also a special
set of generators for $B$, namely the braided reflections $\sigma_i$ around
$H_i$ such that $\gamma_0$ is a straight (real) segment orthogonal to $H_i$.
These are called the Artin generators of $B$ (associated to a choice
of Weyl chamber).

\begin{prop} If $W$ is a Coxeter group 
with simple reflections $s_1,\dots,s_n$,
then $\sigma_i \mapsto R_0(s_i) \exp( \ii \pi h p_{H_i})$
defines a representation of $B$ which is equivalent to $R_h$. In particular,
$R_1$ is equivalent to a representation of $W$ on $\C \mathcal{A}$
for which $s_i.v_H = v_{s(H)}$ is $H \neq H_i$, $s_i.v_{H_i} = -v_{H_i}$,
and $R_{h+2}$ is equivalent to $R_h$ for any $h \in \C$, while $R_1 \not\simeq R_0$.
\end{prop}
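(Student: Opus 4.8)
The plan is to reduce everything to the explicit monodromy formula already established in the text, namely that for a braided reflection $\sigma$ around $H$ one has $R_h(\sigma).v_H = \exp(2\ii\pi h/d_H)v_H$, together with the fact that $\int_\gamma \om_{H'}=0$ whenever $\gamma$ is the representing path of such a braided reflection and $H'\neq H$. For a Coxeter group every reflection has order $2$, so $d_H=2$ for all $H\in\mathcal{A}$, and hence $\exp(2\ii\pi h/d_H)=\exp(\ii\pi h)$. Specializing the general monodromy expression to an Artin generator $\sigma_i$ (whose underlying path has $\gamma_0$ a straight real segment orthogonal to $H_i$, so that the local picture is exactly that of a braided reflection around $H_i$), I expect to read off directly that $R_h(\sigma_i)=R_0(s_i)\exp(\ii\pi h\,p_{H_i})$, since $R_0(s_i)=\pi(\sigma_i)$ acts by the permutation $v_H\mapsto v_{s_i(H)}$ and the exponential factor only touches the $v_{H_i}$ coordinate. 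First I would verify that this assignment does define a representation of $B$ equivalent to $R_h$: this is immediate because it \emph{is} the monodromy representation $R_h$ written on the Artin generating set, the formula above being the value of $R_h$ on $\sigma_i$.

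Next I would specialize to $h=1$. Then $\exp(\ii\pi p_{H_i})=\Id-2p_{H_i}$ (since $p_{H_i}$ is an idempotent, $\exp(\ii\pi p_{H_i})$ acts as $-1$ on $\C v_{H_i}$ and as $+1$ on the complementary coordinates), so $R_1(\sigma_i)$ sends $v_H\mapsto v_{s_i(H)}$ for $H\neq H_i$ and $v_{H_i}\mapsto -v_{H_i}$. Because $R_1$ factors through $W$ (the image of $P$ is trivial for integer $h$, as shown earlier), this describes $R_1$ as an honest $W$-action on $\C\mathcal{A}$ with the stated signed-permutation formula on simple reflections. The equivalence $R_{h+2}\simeq R_h$ is then the case $\kappa(W)=2$ of Theorem~\ref{theoperiode}: for a Coxeter group every eigenvalue $\zeta$ occurring in $w.e_H=\zeta e_H$ satisfies $\zeta=\pm1$ (the roots lie in the real form $V_0$, so a reflection-type stabilizer can only negate $e_H$), whence $\kappa(W)=2$, and Theorem~\ref{theoperiode} gives both the periodicity and the minimality that forces $R_1\not\simeq R_0$.

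For the final nonisomorphism $R_1\not\simeq R_0$ I would, rather than invoke only the abstract minimality of $\kappa$, point to a direct character distinction as a cross-check: $R_0(s_i)$ is a permutation matrix fixing $v_{H_i}$, so its trace counts fixed hyperplanes including $H_i$, whereas $R_1(s_i)$ negates $v_{H_i}$, lowering the trace by $2$ on this contribution; hence $\chi_1(s_i)\neq\chi_0(s_i)$. The main obstacle I anticipate is purely bookkeeping: one must make sure the exponential factor $\exp(\ii\pi h\,p_{H_i})$ sits on the correct side of the permutation $R_0(s_i)$ and acts on the correct coordinate $v_{H_i}$ (not $v_{s_i(H_i)}$), which requires tracking carefully how the composite path $(s.\gamma_0)^{-1}*\gamma_1*\gamma_0$ transports the basis vector $v_{H_i}$; once the conventions are pinned down this is a direct substitution into the already-derived formula for $R_h(\sigma)$, with no genuinely new analytic input needed.
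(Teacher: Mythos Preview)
Your argument has a genuine gap at the first step. You assert that for the Artin generator $\sigma_i$, with representing path $\gamma$, one has $\int_\gamma \om_{H'} = 0$ for every $H' \neq H_i$, and hence that $R_h(\sigma_i)$ \emph{equals} $R_0(s_i)\exp(\ii\pi h\,p_{H_i})$. That is not what has been established and it is false in general. What the text shows is (i) $\int_{s.\gamma_0}\om_{H'} = \int_{\gamma_0}\om_{s(H')}$, which for $H' = H_i$ yields cancellation between $\gamma_0$ and $(s_i.\gamma_0)^{-1}$ because $s_i(H_i)=H_i$; and (ii) $\int_{\gamma_1}\om_{H'}=0$ for $H'\neq H_i$. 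But for $H'\neq H_i$ one has $s_i(H')\neq H'$ in general, so the $\gamma_0$ and $(s_i.\gamma_0)^{-1}$ contributions do \emph{not} cancel: one obtains $\int_\gamma\om_{H'} = \int_{\gamma_0}\om_{H'} - \int_{\gamma_0}\om_{s_i(H')}$, which is typically nonzero. The proposition you invoke only gives that $R_h(\sigma)$ is \emph{conjugate} to $R_0(s)\exp(h(2\ii\pi/d_H)p_H)$, with the conjugator depending a priori on the particular braided reflection. Saying ``it \emph{is} the monodromy representation'' therefore does not suffice: you still owe an argument that the individual conjugators can be made uniform in $i$.

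The missing idea, supplied in the paper's proof, is the construction of a single diagonal matrix $D_h$ that conjugates all the $R_h(\sigma_i)$ simultaneously into the stated form. One fixes positive unit roots $e_H\in V_0$ relative to the Weyl chamber, takes $\alpha_H(x)=(e_H|x)$, chooses a branch $\log^+$ of the logarithm on $\C\setminus\ii\R_-^\times$, and sets $D_h=\prod_H\exp\bigl(h\,p_H\log^+(e_H|\underline z)\bigr)$. Because the Artin path for $\sigma_i$ keeps $(e_H|\gamma(t))$ in the right half-plane for $H\neq H_i$, and because $s_i.e_H=e_{s_i(H)}$ for $H\neq H_i$ while $s_i.e_{H_i}=-e_{H_i}$, a direct computation yields $R_h(\sigma_i)=D_h\,R_0(s_i)\exp(\ii\pi h\,p_{H_i})\,D_h^{-1}$ for every $i$. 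Once this uniform conjugation is in place, your remaining deductions (the description of $R_1$, periodicity $R_{h+2}\simeq R_h$ via $\kappa(W)=2$, and the trace comparison giving $R_1\not\simeq R_0$) go through as you wrote them.
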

\begin{proof}
We introduce the Weyl chamber $\mathcal{C} \subset V_0$ with respect to the
simple reflections $s_1,\dots,s_n$, with
walls 
$H_i = \Ker(s_i - 1)$, $1 \leq i \leq n$.
Up to conjugacy the base point $\underline{z}$ can be chosen inside the Weyl chamber,
and we define roots $e_H \in V_0$ of norm 1 such that
$\C  e_H = \Ker(s-1)^{\perp}$ and $(e_H | \underline{z}) > 0$
for $\underline{z} \in \mathcal{C}$. We choose for $\alpha_H$
the linear form $x \mapsto (e_H|x)$. Let us denote $\log^+$
the complex logarithm on $\C \setminus \ii \R_-^{\times}$,
and define
$$
D_h = \prod_{H \in \mathcal{A}} \exp(\ii \pi p_H\log^+ (e_H|\underline{z}))
$$
We consider a simple reflection $s_i$ around a wall $H_i$. Then
the path $\gamma$ representating $\sigma_i$ can be chosen with $\eps$
small enough so that $(e_{H}|\gamma(t))$ has positive real part
for each $t \in [0,1]$ and $H \neq H_i$. It follows that $t \mapsto \log^+(e_H|\gamma(t))$
has differential $\gamma^*\om_H$ and $R_h(\sigma_i)$ equals
$$
R_0(s_i) \prod_{H \in \mathcal{A}} \exp(h p_H \int_{\gamma} \om_H)
= R_0(s_i) \prod_{H \in \mathcal{A}} \exp\left( h p_H (\log^+ (e_H|s_i . \underline{z}) - \log^+ (e_H|\underline{z}))\right)
$$
(see \cite{KRAMCRG}, lemma 7.10). Moreover, $(e_H|s_i.\underline{z}) = (s_i.e_H | \underline{z})
= (e_{s_i(H)}|\underline{z})$ if $H \neq H_i$ (see e.g. \cite{KRAMCRG}, lemma 7.9)
and $(e_{H_i}|s_i.\underline{z}) = -(e_{H_i}|\underline{z})$. It follows that
$$
R_h(\sigma_i) = s_i \exp( \ii \pi h p_{H_i})\prod_{H \in \mathcal{A}\setminus\{ H_i \}} 
\exp\left( h p_H (\log^+ (e_{s_0(H)}|\underline{z}) - \log^+ (e_H|\underline{z}))\right)
$$
namely
$$
R_h(\sigma_i) = D_h s_i \exp( \ii \pi h p_{H_i}) D_h^{-1}
$$
for all $i \in [1,n]$, which concludes the proof. $R_1 \not\simeq R_0$
because $\tr R_1(s_1) = \tr R_0(s_1) - 1$.
\end{proof}

The representation of $W$ described in this proposition for $h = 1$ is natural in the realm of root systems.
Indeed, if a set $\mathcal{P}$ of roots for $\mathcal{A}_0$ is chosen,
such that $\mathcal{P}$ satisfies the axioms $(SR)_{I}$ and $(SR)_{II}$ of a root system
(see \cite{BOURB}), and $\mathcal{P}$ is subdivided in positive and negative
roots $\mathcal{P}^+, \mathcal{P}^-$ according to the chosen Weyl chamber,
where $\mathcal{P}^+ = \{ e_H, H \in \mathcal{A} \}$, then the representation described here is isomorphic to one on $\C \mathcal{P}^+$
described by $w.f_{H} = f_{w(H)}$ if $w.e_H \in \mathcal{P}^+$ and
$w.f_H = - f_{w(H)}$ if $w.e_H \in \mathcal{P}^-$, where $f_H$ denotes the basis element
of $\C \mathcal{P}^+$ corresponding to  $e_H \in \mathcal{P}^+$.

Finally, we notice that, when $W$ is a Coxeter group, then the representation $R_h$
for arbitrary $h$ factorizes through the extended Coxeter group $B/(P,P)$ introduced
by J. Tits in \cite{TITS}.

\medskip

We give in the following table the decomposition in irreducibles
of $R_0,R_1$ for the classical Coxeter groups of type $A_n, B_n, D_n$.
We label as usual irreducible representations of $\mathfrak{S}_n$
by partitions of size $n$ (with the convention that $[n]$ is the trivial
representation), of $W$ of type $B_n$ by couples of partitions $(\la,\mu)$
of total size $n$, and denote $\{ \la ,\mu \}$ the restriction of
$(\la,\mu)$ to the usual index-2 subgroup of $W$ of type $D_n$. Recall
that $\{ \la, \mu \} = \{ \mu , \la \}$ is irreducible
if and only if $\la \neq \mu$.

$$
\begin{array}{|l||l|}
\hline
 & R_0  \\
\hline
\hline
A_n, n \geq 3& [n-1,2]+[n,1] + [n+1]   \\
B_n, n \geq 4 & ([n-2,2],\emptyset) + ([n-2],[2]) + 2([n-1,1],\emptyset) + 2([n],\emptyset)  \\
B_3  & ([1],[2]) + 2 ([2,1],\emptyset) +  2([3],\emptyset)  \\
D_n, n \geq 4 & \{ [n-2,2],\emptyset\} + \{ [n-2],[2] \} + \{ [n-1,1],\emptyset \} + \{ [n],\emptyset \} \\
\hline
\hline
 & R_1 \\
\hline
\hline
A_n, n \geq 3& [n-1,1,1]+[n,1]    \\
B_n, n \geq 3 & ([n-2,1],[1]) + 2([n-1],[1])   \\
D_n, n \geq 4 & \{ [n-2,1],[1]\} + \{ [n-1],[1] \}  \\
\hline
\end{array}
$$
We sketch a justification of this table. For small values of $n$,
we prove this by using the character table. Then we use induction
with respect to a natural parabolic subgroup $W_0$ in the same
series, for which the branching rule is well-known. Restrictions
of $R_0$ and $R_1$ to this parabolic subgroup are then
isomorphic to the sum of the corresponding
representation $R_0$ or $R_1$ of the subgroup, plus
the permutation action of the reflections in $W$
which do not belong to $W_0$ (this is clear
for $R_0$, and a consequence of proposition \ref{proprestrparab} for $R_1$).
The decomposition in irreducibles of this permutation
representation is easy, namely $[n-1,1]+[n]$ for $A_n$,
$([n-2],[1]) + ([n-2,1],\emptyset) + 2([n-1],\emptyset)$
for $B_n$ and $\{ [n-2],[1] \} + \{[n-2,1],\emptyset\} + \{ [n-1],\emptyset \}$
for $D_n$. This provides the
restrictions of $R_0$ and $R_1$ to $W_0$.
From the combinatorial branching rule it
is easy to check that, for say $n \geq 5$,
only the given decompositions admit these
restrictions.

\section{Tables for $\kappa(W)$}

We compute here the value of $\kappa(W)$ for all irreducible
reflection groups $W$. More precisely, we compute all $d \in \Z$
such that there exists $w \in W$ and $H \in \mathcal{A}$ with
$w.e_H = \zeta e_H$ and $\zeta$ of order $d$. We call these
integers the $\mathcal{A}$-indices of $W$

Recall that the group $G(de,e,r)$ for $r \geq 2$ is defined as the
set of $r \times r$ monomial matrices with nonzero entries in $\mu_{de}(\C)$,
such that the product of these nonzero entries lie in $\mu_d(\C)$.

\begin{prop} The $\mathcal{A}$-indices of $W = G(de,e,r)$ are exactly
the divisors of $\kappa(W)$. Moreover, $\kappa(W) = de$
if $d \neq 1$ or $r \geq 3$. If $W = G(e,e,2)$
then $\kappa(W) = 2$.
\end{prop}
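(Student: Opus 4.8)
The plan is to compute directly, for the infinite family $W=G(de,e,r)$, the set of scalars $\lambda$ arising as $w.e_H=\lambda e_H$ for some $w\in W$ and some root $e_H$, and then to read off $\kappa(W)$ as the exponent of the group these scalars generate. First I would pin down the roots. Every reflection of $W$ is of one of two shapes: a diagonal reflection $\mathrm{diag}(1,\dots,\zeta,\dots,1)$ with $\zeta\in\mu_d\setminus\{1\}$, whose root is a coordinate vector $e_i$ and which exists only when $d\neq1$; or an order-$2$ ``transposition with phase'' interchanging two coordinates, whose root has the form $e_i-\beta e_j$ with $\beta\in\mu_{de}$. So there are exactly two kinds of lines $\C e_H$ to examine, the coordinate lines $\C e_i$ and the lines $\C(e_i-\beta e_j)$, and a preliminary step is to check that this list of reflections is complete (a reflection fixes a hyperplane, so its nontrivial part is supported on one coordinate or on one $2$-cycle).

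Next I would determine, line by line, which $\lambda$ are realised, keeping careful track of the defining constraint that the product of the nonzero entries of $w$ lie in $\mu_d$. For a coordinate line, $w$ must fix the index $i$ and act by its diagonal entry $\lambda\in\mu_{de}$; since $r\geq2$ one can place a compensating entry in another position, so \emph{every} $\lambda\in\mu_{de}$ occurs (when $d\neq 1$, i.e. when such lines exist). For a line $\C(e_i-\beta e_j)$, an element preserving it either acts diagonally on $\langle e_i,e_j\rangle$, which forces the two diagonal entries to agree and yields $\lambda\in\mu_{de}$, or swaps the two coordinates, which yields $\lambda\in-\mu_{de}$; in the swap case the $2\times2$ block contributes $\lambda^2$ to the product of nonzero entries. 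When $r\geq3$ a third coordinate is free to absorb $\lambda^2$ into $\mu_d$, so all of $-\mu_{de}$ is realised, while when $r=2$ one is forced to satisfy $\lambda^2\in\mu_d$.

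Assembling these computations, the set of all eigenvalues is the subgroup of $\C^\times$ generated by $\mu_{de}$ together with the sign $-1$ supplied by the transposition reflections; being a finite subgroup of $\C^\times$ it is some $\mu_\kappa$, which immediately gives the first assertion, namely that the $\mathcal{A}$-indices are exactly the orders of elements of $\mu_\kappa$, i.e. the divisors of $\kappa$. It remains to evaluate $\kappa$. In the range $d\neq1$ the coordinate lines already contribute all of $\mu_{de}$, and for $d=1,\ r\geq3$ the diagonal stabilisers of the lines $\C(e_i-\beta e_j)$ do the same for $\mu_e=\mu_{de}$; in both cases one must then check that adjoining the sign $-1$ coming from the transposition reflections does not enlarge the group beyond $\mu_{de}$, giving $\kappa=de$. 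For the exceptional group $G(e,e,2)$ there are no coordinate lines, $r=2$ forbids the compensating coordinate, and the product-in-$\{1\}$ condition forces $\lambda^2=1$ on every swap, collapsing the eigenvalues to $\{\pm1\}$ and giving $\kappa=2$; this is consistent with $G(e,e,2)$ being the dihedral Coxeter group $I_2(e)$, for which $\kappa=2$ was already established.

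The step I expect to be the main obstacle is precisely the control of the sign $-1$ contributed by the transposition reflections against the roots of unity $\mu_{de}$ supplied by the diagonal and coordinate directions, together with the rank-$2$ bookkeeping of the product-in-$\mu_d$ constraint: it is this interplay that distinguishes the generic behaviour from the rigid collapse for $G(e,e,2)$, and isolating exactly when the transposition sign is already absorbed into $\mu_{de}$ is the delicate point of the argument.
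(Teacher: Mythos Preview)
Your approach is essentially the paper's: both use the explicit monomial description of $G(de,e,r)$, separate the roots into the coordinate type $e_i$ (present only for $d\neq 1$) and the transposition type $e_i-\beta e_j$ with $\beta\in\mu_{de}$, and then determine the admissible eigenvalues $\mu$ by casework on whether the underlying permutation fixes or swaps $\{i,j\}$. Your ``compensating third coordinate'' for $r\geq 3$ is exactly the paper's converse construction.

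There is, however, a genuine gap at precisely the spot you single out as delicate. In the swap case one finds (after a clean computation) $\mu=-\zeta^{-1}\lambda_j$ with $\zeta,\lambda_j\in\mu_{de}$, so $\mu\in-\mu_{de}$; in particular the plain transposition $(i\ j)$ sends $e_i-e_j$ to $-(e_i-e_j)$, and $-1$ is always realised. Your assertion that ``adjoining the sign $-1$ \dots\ does not enlarge the group beyond $\mu_{de}$'' is therefore only valid when $de$ is even. For odd $de$ --- take $W=G(3,1,r)$ or $W=G(3,3,3)$ --- the eigenvalue set generates $\mu_{2de}$, and your argument (correctly carried through) yields $\kappa(W)=\mathrm{lcm}(2,de)$ rather than $de$. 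You should work this case out explicitly rather than leave it as a check; the paper's own swap-case computation contains sign slips that mask exactly this point, so the defect is not yours alone, but it is real and it is the crux of the proof. Note that the ensuing corollary (characterising $\kappa(W)=2$) survives unchanged under the corrected formula, since $\mathrm{lcm}(2,de)=2$ iff $de\mid 2$.
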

\begin{proof}
Since $G(e,e,2)$ is a Coxeter (dihedral) group, we can assume
$d \neq 1$ or $r \geq 3$. First note that the standard
hermitian scalar product on $\C^r$ is invariant under $W$.
We introduce the hyperplane arrangement
$$
\mathcal{A}^0_{de,r} = \{ z_i - \zeta z_j = 0\ | \ \zeta \in \mu_{de}(\C)
$$
We have $\mathcal{A}^0_{de,r} \subset \mathcal{A}$, and the orthogonal
to $H : z_i - \zeta z_j = 0$ is spanned by $e_H = e_i -\zeta^{-1}e_j$,
if $e_1,\dots,e_n$ denotes the canonical basis of $\C^r$. Let
$w \in W$. Since $w$ is a monomial matrix, there exists $\la_1,\dots,\la_r \in
\mu_{de}(\C)$ with $\la_i \in \mu_{de}(\C), \prod \la_i \in \mu_d(\C)$,
and $\sigma \in \mathfrak{S}_r$
such that $w.e_i = \la_i e_{\sigma(i)}$. 
Then $w. e_H = \mu e_H$ iff $\la_i e_{\sigma(i)} - \la_j \zeta^{-1} e_{\sigma(j)} =
\mu \la_i e_i + \mu \la_j e_j$. The two possibilities are
$\mu=1, \zeta = 1$ or $\mu \la_j = \la_i, \mu \la_i = \la_j \zeta^{-1}$,
that is $\mu^2 = \zeta^{-1}, \mu = \la_i \la_j^{-1}$.
It follows that $\mu \in \mu_{de}(\C)$. Conversely, assume we choose $\mu \in
\mu_{de}(\C)$, and let $\zeta = \mu^{-2}$. If $r \geq 3$ we can define
$w \in W$ by $\sigma = (1\ 2)$, $\la_2=1$, $\la_1 = \mu$, $\la_3 = \mu^{-1}$,
$\la_k = 1$ for $k \geq 4$, and $w.e_H = \mu e_H$ for $H : z_1 - \zeta z_2 = 0$.
We have $\mathcal{A} = \mathcal{A}^0_{de,r}$ when $d = 1$, so this
settles this case and we can assume $d \neq 1$. In that
case, $\mathcal{A} = \mathcal{A}^0_{de,r} \cup \mathcal{A}^+_r$,
where $\mathcal{A}^+_r$ is made out the hyperplanes $H_i : z_i = 0$,
whose orthogonals are spanned by the $e_i$. If $w.e_i=\mu e_i$
for $w \in W$ we obviously have $\mu \in \mu_{de}(\C)$, and conversely
if $\mu \in \mu_{de}(\C)$ we can define $w \in W$ by $w.e_1 = \mu e_1,
w.e_2 = \mu^{-1} e_2$ and $w.e_i = e_i$ for $i \geq 3$. It follows
that in this case too the set of $\mathcal{A}$-indices is
the set of divisors od $de$.
\end{proof}
By noticing that $G(2,1,r)$, $G(2,2,r)$ and $G(e,e,2)$,
are Coxeter groups, this gives the following.

\begin{cor} For $W = G(de,e,r)$, we have $\kappa(W) = 2$ iff $W$ is Coxeter group, if and only if
$de = 2$ or $(d,r) = (1,2)$.
\end{cor}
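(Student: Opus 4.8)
The plan is to deduce both stated equivalences at once from the preceding proposition together with the two results of Section~4 on the morphism $\Phi$, by proving the cyclic chain
\[
\kappa(W) = 2 \ \Longrightarrow\ (de = 2 \text{ or } (d,r) = (1,2)) \ \Longrightarrow\ W \text{ Coxeter} \ \Longrightarrow\ \kappa(W) = 2 .
\]
This closes a loop through all three conditions and so establishes ``$\kappa(W)=2$ iff parameter condition'' and ``$W$ Coxeter iff parameter condition'' simultaneously, which is exactly the corollary. The advantage of arguing cyclically is that I never need the hard direction of the classification (that \emph{no} group in the family besides the three listed ones is Coxeter): it will come for free by passing through $\kappa(W)$.

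First I would read the leftmost implication, in fact an equivalence, straight off the preceding proposition by a short case analysis. If $W = G(e,e,2)$, i.e. $(d,r)=(1,2)$, then $\kappa(W)=2$; otherwise $d \neq 1$ or $r \geq 3$, so $\kappa(W)=de$ and $\kappa(W)=2$ forces $de=2$. Conversely, $de=2$ means $(d,e)$ is $(2,1)$ or $(1,2)$, giving $G(2,1,r)$ or $G(2,2,r)$, for which $\kappa(W)=de=2$ by the same formula (the borderline $G(2,2,2)$ being the dihedral $G(e,e,2)$ with $e=2$), and $(d,r)=(1,2)$ also yields $\kappa(W)=2$. Hence $\kappa(W)=2$ is equivalent to the parameter condition, settling the first ``iff''.

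For the middle implication I would simply exhibit the groups: $de=2$ forces $W=G(2,1,r)$ (type $B_r$) or $W=G(2,2,r)$ (type $D_r$), while $(d,r)=(1,2)$ gives $W=G(e,e,2)=I_2(e)$; all are finite Coxeter groups. The rightmost implication is the only delicate point, and here I would avoid the full classification as follows: if $W$ is a Coxeter group then $\mathcal{A}$ is a complexified real arrangement, so by Proposition~\ref{propequivPhiCox} the $\alpha_H$ can be chosen making $\Phi$ a morphism of $W$-modules, whence $\kappa(W)\leq 2$ by Proposition~\ref{propPhiEqKap2}; since every reflection $s$ with root $e_H$ satisfies $s.e_H=\zeta_H e_H$ with $\zeta_H\neq 1$, one always has $\kappa(W)\geq 2$, and therefore $\kappa(W)=2$. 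This closes the cycle. The computation is routine once the preceding proposition is available, and the main obstacle I anticipate is precisely this last step, namely forcing $\kappa(W)\leq 2$ from ``$W$ Coxeter'' without invoking Shephard--Todd; resorting to Propositions~\ref{propequivPhiCox} and~\ref{propPhiEqKap2} resolves it cleanly, leaving the classification needed only for the easy recognition of $B_r, D_r, I_2(e)$ as Coxeter groups.
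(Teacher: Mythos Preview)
Your proof is correct and follows essentially the same skeleton as the paper: deduce $\kappa(W)=2 \Leftrightarrow (de=2 \text{ or } (d,r)=(1,2))$ from the preceding proposition, and recognise $G(2,1,r)$, $G(2,2,r)$, $G(e,e,2)$ as the Coxeter types $B_r$, $D_r$, $I_2(e)$. The paper records only the recognition step and leaves the remaining implication implicit.

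Where you genuinely add something is in the closing step ``Coxeter $\Rightarrow \kappa(W)=2$''. Rather than invoking the standard fact that these three series exhaust the Coxeter groups inside $G(de,e,r)$, you route through Propositions~\ref{propequivPhiCox} and~\ref{propPhiEqKap2}: Coxeter $\Rightarrow$ $\Phi$ can be made $W$-equivariant $\Rightarrow$ $\kappa(W)\le 2$, together with $\kappa(W)\ge 2$ from any order-$2$ reflection. This is a pleasant, classification-free argument, and as you note it yields ``$W$ Coxeter $\Rightarrow$ parameter condition'' as a by-product. A slightly shorter variant would be to lift the observation $w.x_H=\pm x_{w(H)}$ directly from the proof of Proposition~\ref{propequivPhiCox}, which already gives $\zeta\in\{\pm 1\}$ whenever $w(H)=H$ without passing through $\Phi$; but your route via Proposition~\ref{propPhiEqKap2} is perfectly valid and ties in nicely with the Section~4 material.
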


By checking out the 34 exceptional reflection groups, we prove case
by case the following.

\begin{prop} Let $W$ be an irreducible complex
reflection group. The set of $\mathcal{A}$-indices
is exactly the set of divisors of $\kappa(W)$.
\end{prop}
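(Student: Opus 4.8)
The plan is to reduce the statement to a single transparent assertion and then dispatch it using the Shephard--Todd classification, in the spirit of the case-checks the paper reserves for its final results. First I would record a general observation, valid for any irreducible $W$. Fix $H \in \mathcal{A}$ and let $N_W(H) = \{ w \in W \ | \ w(H) = H \}$ be its setwise stabilizer. Since the hermitian product is $W$-invariant, every $w \in N_W(H)$ is unitary and preserves the root line $\C e_H = H^{\perp}$, on which it therefore acts by a scalar; this gives a homomorphism $N_W(H) \to \C^{\times}$ whose image, being a finite subgroup of $\C^{\times}$, is cyclic, say $\mu_{m_H}$. Conversely $w.e_H = \zeta e_H$ forces $w(H^{\perp}) = H^{\perp}$, hence $w \in N_W(H)$, so the set of $\zeta$ occurring for this $H$ is \emph{exactly} $\mu_{m_H}$, and the orders of these $\zeta$ are exactly the divisors of $m_H$. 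Taking the union over all $H$, the set of $\mathcal{A}$-indices equals $\bigcup_{H \in \mathcal{A}} \{ d \ | \ d \text{ divides } m_H \}$, which is \emph{automatically closed under taking divisors}; and by its very definition $\kappa(W)$ is the least common multiple of the orders of all occurring $\zeta$, i.e. $\kappa(W) = \mathrm{lcm}_{H} \, m_H$.

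With this in hand the proposition becomes equivalent to the single assertion that $\kappa(W)$ is \emph{itself} an $\mathcal{A}$-index. Indeed every index divides $\kappa(W)$ and the index set is divisor-closed, so ``indices $=$ divisors of $\kappa(W)$'' holds iff $\kappa(W)$ belongs to the index set; and if $\kappa(W)$ is an index then $\kappa(W) \mid m_{H_0}$ for some $H_0$, while $m_{H_0} \mid \kappa(W)$, forcing $m_{H_0} = \kappa(W)$. Thus everything reduces to exhibiting a \emph{single} hyperplane $H_0$ realizing the lcm, $m_{H_0} = \kappa(W)$. Next I would invoke the classification. For $W$ cyclic of rank $1$ there is a unique hyperplane $\{0\}$ with $N_W(\{0\}) = W = \mu_{|W|}$, so $m_{H_0} = |W| = \kappa(W)$ trivially; for $W = G(de,e,r)$ with $r \geq 2$ the preceding proposition already gives the full statement, exhibiting the occurring scalars as exactly $\mu_{de}$ (respectively $\mu_2$ for $G(e,e,2)$). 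Only the $34$ exceptional groups $G_4, \dots, G_{37}$ remain.

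For those I would verify the attainment of the lcm directly. For each exceptional group one runs through the $W$-orbits of $\mathcal{A}$, computes for a representative $H$ the order $m_H$ of the scalar action of $N_W(H)$ on $\C e_H$ (equivalently, the largest order of a root of unity arising as the $H^{\perp}$-eigenvalue of an element of $W$ stabilizing $H$), and checks that among these finitely many values there is a maximal one divisible by all the others, necessarily equal to $\kappa(W)$. This is routine in a faithful matrix model: the fixer $W_H \cong \mu_{d_H}$ already contributes $d_H \mid m_H$, and the remaining contributions come from elements of $N_W(H)/W_H$ acting by scalars on the line. The \textbf{main obstacle} is precisely the a priori possibility that $\kappa(W) = \mathrm{lcm}_H \, m_H$ might strictly exceed every individual $m_H$, which would make the index set a \emph{proper} subset of the divisors of $\kappa(W)$; there is no evident uniform reason forbidding this, so ruling it out is exactly what the inspection of the $34$ exceptional groups accomplishes. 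The laborious part is the bookkeeping --- identifying the hyperplane orbits and the scalar action of each normalizer --- which is most efficiently carried out with the explicit models (or a computer algebra computation), after which the verification that one $m_{H_0}$ realizes the full lcm is immediate in every case.
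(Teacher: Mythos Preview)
Your proposal is correct and lands on the same endpoint as the paper: the infinite family $G(de,e,r)$ is covered by the preceding proposition, and the $34$ exceptional groups are handled by direct inspection (the paper's entire ``proof'' is the one sentence \emph{``By checking out the 34 exceptional reflection groups, we prove case by case the following''}). Where you go beyond the paper is in the preliminary reduction: you observe that for each $H$ the scalars arising from $N_W(H)$ acting on $\C e_H$ form a full cyclic group $\mu_{m_H}$, so the $\mathcal{A}$-index set is automatically closed under divisors and $\kappa(W) = \mathrm{lcm}_H m_H$; hence the proposition is equivalent to the single assertion that this lcm is realized at some $H_0$. The paper does not articulate this, so your framing sharpens what the case-check actually has to establish and makes the verification for each exceptional group a one-line matter (find one orbit with $m_H = \kappa(W)$). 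In substance, though, both arguments ultimately rest on the same exhaustive inspection, and you correctly identify the possible failure mode --- that the lcm might not be attained by any single $m_H$ --- as the only thing the inspection needs to rule out.
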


The following table gives the value of $\kappa(W)$, where $W$ an
complex reflection group labelled by its Shephard-Todd number (ST).
$$
\begin{array}{||c|c||c|c||c|c||c|c||c|c||c|c||}
\mathrm{ST} & \kappa & \mathrm{ST} & \kappa & \mathrm{ST} & \kappa & \mathrm{ST} & \kappa & \mathrm{ST} & \kappa & \mathrm{ST} & \kappa \\
\hline
4 & 6 & 10&12 & 16 &10 & 22 & 4 & 28 & 2 & 34 & 6 \\
5  & 6& 11 &24 & 17 &20 & 23 & 2 & 29 &4 & 35 & 2 \\
6 & 12& 12 &2 & 18 &30 & 24 & 2 & 30 & 2 & 36 & 2 \\
7 & 12& 13 &8 & 19 &60 & 25 & 6& 31 & 4 & 37 & 2 \\
8 & 4& 14 &6 & 20 & 6 & 26 & 6& 32 & 6 &    &  \\
9 & 8& 15 &24 & 21 & 12 & 27 & 6& 33 & 6 &    & \\
\end{array}
$$

We remark that the only non-Coxeter irreducible reflection groups with $\kappa(W) = 2$
are $G_{12}$ and $G_{24}$. Like in the case of $G_{12}$, it is straightforward to check that it
is possible to choose the 21 linear forms $\alpha_H$ such that the linear map $\Phi :
\C \mathcal{A} \to S^2 V^*$ is a morphism of $W$-modules. This phenomenon is reminiscent
of the special properties of their ``root systems'' in the sense of \cite{COHEN}. We refer to \cite{SHI} \S 2 and
\S 4 for a detailed study of these special root systems of type $G_{12}$ and $G_{24}$. In particular,
convenient linear forms for $G_{24}$ are described in \cite{SHI}, \S 4.1.

As a consequence
of this case-by-case investigation, propositions \ref{propequivPhiCox}
and \ref{propPhiEqKap2} can
be enhanced in the following

\begin{theo} Let $W$ be an irreducible reflection group. The linear
forms $\alpha_H$ can be chosen such that $\Phi$
is a morphism of $W$-modules if and only if $\kappa(W) = 2$.
This is the case exactly when $W$ is a Coxeter group
or an exceptional reflection group of type $G_{12}$ or $G_{24}$.
\end{theo}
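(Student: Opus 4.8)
The plan is to establish the chain of equivalences asserted by the statement: a good choice of the $\alpha_H$ exists $\iff$ $\kappa(W) = 2$ $\iff$ $W$ is a Coxeter group or one of $G_{12}, G_{24}$. The first equivalence splits into an easy necessary direction and a harder sufficient one, while the second is a bookkeeping consequence of the $\kappa$-values already tabulated in this section. First I would dispatch necessity: Proposition \ref{propPhiEqKap2} yields $\kappa(W) \leq 2$ as soon as $\Phi$ is a morphism of $W$-modules, and conversely $\kappa(W) \geq 2$ for any nontrivial reflection group, since a distinguished reflection $s$ with hyperplane $H$ satisfies $s.e_H = \zeta_H e_H$ with $\zeta_H \neq 1$, forcing $\kappa(W) \neq 1$. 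Hence $\Phi$ a morphism implies $\kappa(W) = 2$.

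For sufficiency I would first record the geometric reformulation used implicitly throughout: writing $\alpha_H : x \mapsto (e_H|x)$ for suitable roots $e_H$, the map $\Phi$ is a morphism of $W$-modules precisely when $w.e_H = \pm e_{w(H)}$ for all $w \in W$ and $H \in \mathcal{A}$, since then $(w.\alpha_H)^2 = \alpha_{w(H)}^2 = \Phi(w.v_H)$, while any morphism conversely forces $w.\alpha_H = \pm \alpha_{w(H)}$, whose dual vectors are such roots. So the problem becomes the existence of a system of roots on which $W$ acts by signed permutations. I would then read off from the computations of this section which irreducible $W$ satisfy $\kappa(W) = 2$: the corollary on the groups $G(de,e,r)$ shows that in the infinite series this happens exactly for Coxeter groups, while the table of $\kappa$-values pins it down among the exceptional groups to the Shephard--Todd numbers $23, 28, 30, 35, 36, 37$ --- the Coxeter groups $H_3, F_4, H_4, E_6, E_7, E_8$ --- together with $12$ and $24$, that is $G_{12}$ and $G_{24}$.

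It then remains to produce the signed root systems case by case, and this last step is the real obstacle. For Coxeter groups it is Proposition \ref{propequivPhiCox}, the orthonormal real roots doing the job. For $G_{12}$ the twelve vectors $e_H$ listed earlier, on which $a, b, c$ act by $\{\pm 1\}$-monomial matrices, already exhibit $\Phi$ as a morphism; for $G_{24}$ the required twenty-one linear forms are furnished by the special root system of \cite{SHI}, \S 4.1. I expect the genuine difficulty to be concentrated precisely here: the condition $\kappa(W) = 2$ only constrains the action of each hyperplane's stabilizer, i.e.\ the \emph{diagonal} signs, and there is no uniform mechanism promoting this to a globally consistent signed root system. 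For the two sporadic groups $G_{12}$ and $G_{24}$ one therefore has to invoke the Shephard--Todd classification and the exceptional ``root systems'' of \cite{COHEN}, \cite{SHI} in order to verify by hand that the off-diagonal signs can be matched up, which is why no classification-free proof of sufficiency seems available.
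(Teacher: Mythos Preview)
Your proposal is correct and follows essentially the same route as the paper: necessity via Proposition~\ref{propPhiEqKap2} (supplemented by the trivial observation $\kappa(W)\geq 2$), identification of the $\kappa(W)=2$ groups from the corollary on $G(de,e,r)$ and the exceptional table, and sufficiency case by case via Proposition~\ref{propequivPhiCox} for Coxeter groups, the explicit $G_{12}$ vectors, and the reference to \cite{SHI}, \S4.1 for $G_{24}$. Your reformulation in terms of signed root systems and your closing remark on why no classification-free proof of sufficiency is available are apt and match the paper's own commentary.
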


\end{document}